\documentclass
{amsart}
\usepackage[utf8]{inputenc}
\usepackage{amssymb,amscd}
\usepackage[all,line,arc,curve,color,frame,pdf]{xy}
\usepackage{tikz}
\usepackage{tikz-cd}
\usetikzlibrary{positioning, trees, snakes}
\usepackage[textsize=tiny]{todonotes}
\usepackage{hyperref}
\usepackage[shortlabels]{enumitem}
\usepackage{float} 
\usepackage[paper=a4paper, margin=3.5cm]{geometry}
\usepackage{cleveref}   
\usepackage{comment}
\usepackage{scalerel}
\usepackage[normalem]{ulem}
\usepackage{listings}
\usepackage{verbatim}
\usepackage{marvosym} 

\numberwithin{equation}{section}
\theoremstyle{plain}
\newtheorem{thm}{Theorem}[section]
\newtheorem{cor}[thm]{Corollary}
\newtheorem{lemma}[thm]{Lemma}
\newtheorem{prop}[thm]{Proposition}

\newtheorem{remark}[thm]{Remark}

\theoremstyle{remark}

\newtheorem*{notation*}{Notation}

\theoremstyle{definition}
\newtheorem{defi}[thm]{Definition}


\def\ot{\otimes}



\newcommand{\HS}{{\rm HS}}

\subjclass[2020]{primary: 15A69, 14Q20 
}
\usepackage[textsize=tiny]{todonotes}

\usepackage{enumitem}

\newcommand\ignore[1]{}

\def\C{{\mathbb C}}
\def\F{{\mathbb F}}

\def\N{{\mathbb N}}
\def\P{{\mathbb P}}

\def\R{{\mathbb R}}

\def\operatorname#1{\mathop{\rm #1}\nolimits}

\def\deg{\operatorname{deg}}

\newcommand{\pb}{\ar@{}[dr]|{\text{\pigpenfont J}}}

\makeatletter
\newcommand{\xleftrightarrow}[2][]{\ext@arrow 3359\leftrightarrowfill@{#1}{#2}}
\makeatother
\newcommand{\xdasharrow}[2][->]{
\tikz[baseline=-\the\dimexpr\fontdimen22\textfont2\relax]{
\node[anchor=south,font=\scriptsize, inner ysep=1.5pt,outer xsep=2.2pt](x){#2};
\draw[shorten <=3.4pt,shorten >=3.4pt,dashed,#1](x.south west)--(x.south east);
}}

\newcommand{\aff}[1]{\widehat{#1}}

\begin{document}
\title[Identifiability for mixtures of centered Gaussians]{Identifiability for mixtures of centered Gaussians and sums of powers of quadratics}

\author[Blomenhofer]{Alexander Taveira Blomenhofer}
\address{University of Konstanz, Germany, Fachbereich Mathematik und Statistik, 
	D-78457 Konstanz, Germany}
\email{alexander.taveira-blomenhofer@uni-konstanz.de}

\author[Casarotti]{Alex Casarotti}
\address{Università di Trento, Via Sommarive, 14 - 38123 Povo (Trento), Italy}
\email{alex.casarotti@unitn.it}

\author[Micha{\l}ek]{Mateusz Micha{\l}ek}
\address{
	University of Konstanz, Germany, Fachbereich Mathematik und Statistik, Fach D 197
	D-78457 Konstanz, Germany
}
\thanks{MM is funded by the Deutsche Forschungsgemeinschaft –- Projektnummer 467575307.}
\email{mateusz.michalek@uni-konstanz.de}

\author[Oneto]{Alessandro Oneto}
\address{Università di Trento, Via Sommarive, 14 - 38123 Povo (Trento), Italy}
\email{alessandro.oneto@unitn.it}
\thanks{AO is partially supported by MIUR and GNSAGA of INdAM (Italy).}

\begin{abstract} 
	We consider the inverse problem for the polynomial map which sends an $m$-tuple of quadratic forms in $n$ variables to the sum of their $d$-th powers. This map captures the moment problem for \emph{mixtures of $m$ centered $n$-variate Gaussians}. In the first non-trivial case $d = 3$, we show that for any $ n\in \N $, this map is generically one-to-one (up to permutations of $ q_1,\ldots, q_m $ and third roots of unity) in two ranges: $m\le {n\choose 2} + 1 $ for $n \le 16$  and  $  m\le {n+5 \choose 6}/{n+1 \choose 2}-{n+1 \choose 2}-1$ for $n > 16$, thus proving \emph{generic identifiability} for mixtures of centered Gaussians from their (exact) moments of degree at most $ 6 $. The first result is obtained by the explicit geometry of the tangential contact locus of the variety of sums of cubes of quadratic forms, as described in \cite{Chiantini_Ottaviani_2012}, while the second result is accomplished using the link between secant non-defectivity with identifiability, proved in \cite{casarotti2022non}. The latter approach generalizes also to sums of $ d $-th powers of $k$-forms for $d \geq 3$ and $k \geq 2$. 
\end{abstract}

\maketitle

\section{Introduction}

\subsection{Motivation} 

Ever since the pioneering work of Pearson (original work in \cite{Pearson_1900}, but see e.g. \cite{Amendola_Faugere_Sturmfels_2016} for a modern exposition) on the separation of biological species, \emph{Gaussian mixture models} are a highly important tool for modern data analysis. Given the moments of a mixture $ Y $ of $n$-dimensional normally distributed random variables $ Y_i \sim \mathcal{N}(\mu_i, \Sigma_i) $, one aims to recover the parameters $ (\mu_1, \Sigma_1), \ldots, (\mu_m, \Sigma_m) $ (up to permutation). One of the first and fundamental questions to ask is under which circumstances there exists a unique solution, essentially justifying that the parameters bear information which is ``meaningful'' for statistical inference. In this work, we are concerned with the special case of centered Gaussians, i.e., $ \mu_1 = \ldots = \mu_m = {0} \in \R^n$, and we focus on moments of one, fixed degree. Then, the estimation problem turns out to be profoundly related to an algebraic problem. Identify $S^d(\C^n)$ with the vector space of degree-$d$ homogeneous polynomials in $n$-variables with complex coefficients. Given a sum 
\begin{align}\label{eq:decomposition}
		\sum_{i=1}^m q_i^d \in S^{2d}(\C^n)
\end{align} 
of powers of quadratic forms $q_1,\ldots, q_m \in S^2(\C^n)$, when can we obtain the addends $q_1^d,\ldots, q_m^d$ up to permutation? This is the \emph{decomposition problem for powers of quadratic forms}. Note that if the $q_i$'s are the quadratic forms corresponding to the covariance matrices $\Sigma_i$'s of the centered Gaussian random vectors $Y_i$'s, then, up to a factor of $ \frac{1}{m} $, \eqref{eq:decomposition} corresponds to the $2d$-th moment of their (uniformly weighted) mixture $Y$. We will fully explain the connection in \Cref{sec:Mixtures-of-Gaussians}. While of course the decomposition problem asks to \emph{obtain} the addends in the sense of algorithmic computation, our set objective in the present work is just to examine when there exists a unique solution. In that case, we say that \textit{identifiability} holds. The smallest nontrivial case (and arguably also the most interesting one) is when moments of degree $6$ are given, that is $d = 3$. Indeed, note that for $d = 2$ we cannot hope for identifiability since $q_1^2 + q_2^2 = \frac{1}{2}(q_1+q_2)^2 + \frac{1}{2}(q_1-q_2)^2$. 

\subsection{Statement of main results}\label{sec:main-result-statement}
In the case of sextics as sums of cubes of quadratic forms, we obtain the following result about general identifiability.

\begin{thm}\label{thm:main-result-cubes-quadratics} 
	Let $ n, m\in \N $ such that one of the following holds:
	\begin{enumerate}\label{eq:main-result-condition}
		\item $n > 16$ and $m\le {n+5 \choose 6}/{n+1 \choose 2}-{n+1 \choose 2}-1$; or
		\item $n \leq 16$ and $m\le {n\choose 2} + 1$.
	\end{enumerate}
	Then, for general $q_1,\ldots, q_m \in S^2(\C^n) $, the sextic $t = \sum_{i=1}^m q_i^3$ has a unique representation as a sum of $ m $ cubes of quadratic forms, up to permutation and third roots of unity. 
\end{thm}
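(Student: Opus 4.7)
The plan is to treat the two cases of the theorem with different machineries. Throughout, let $X_n \subset \PP(S^6(\C^n))$ denote the projective variety parametrizing cubes of quadratic forms, and $\sigma_m(X_n)$ its $m$-th secant variety. Since the affine tangent space to $X_n$ at a smooth point $[q^3]$ is $q^2 \cdot S^2(\C^n)$, of dimension $\binom{n+1}{2}$, Terracini's lemma identifies the affine tangent space to $\sigma_m(X_n)$ at a general point $[\sum q_i^3]$ with $T := \sum_{i=1}^m q_i^2 \cdot S^2(\C^n)$. Identifiability is then equivalent to the general fiber of the addition map $X_n^{\times m} \dashrightarrow \sigma_m(X_n)$ having cardinality exactly $m! \cdot 3^m$, accounting for permutations and third roots of unity.

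For case (2), with $n \le 16$ and $m \le \binom{n}{2}+1$, I would invoke the tangential contact locus criterion of \cite{Chiantini_Ottaviani_2012}. The tangential contact locus $\Sigma_T$ is the closure of the set of $[q^3] \in X_n$ with $q^2 \cdot S^2(\C^n) \subseteq T$; Chiantini-Ottaviani ensures identifiability as soon as $\Sigma_T$ is zero-dimensional and supported on the chosen points $\{[q_1^3],\ldots,[q_m^3]\}$. The main step is therefore to describe $\Sigma_T$ explicitly by translating the inclusion $q^2 \cdot S^2(\C^n) \subseteq \sum_i q_i^2 \cdot S^2(\C^n)$ into algebraic conditions on $q$ (via the structure of the ideals or Hilbert functions of the $q_i^2$'s), and showing that in the numerical range $m \le \binom{n}{2}+1$ these conditions force $q$ to be a scalar multiple of some $q_i$.

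For case (1), with $n > 16$ and $m \le \binom{n+5}{6}/\binom{n+1}{2} - \binom{n+1}{2} - 1$, I would apply the implication ``non-defectivity implies identifiability'' of \cite{casarotti2022non}. The chosen bound sits below the generic rank $\binom{n+5}{6}/\binom{n+1}{2}$ by exactly the buffer $\binom{n+1}{2}+1$ required by that criterion. Thus the task reduces to proving non-defectivity of $\sigma_m(X_n)$ for all $m$ in the stated range, i.e., by Terracini's lemma, to showing that the $m$ general subspaces $q_i^2 \cdot S^2(\C^n)$ are in direct sum inside $S^6(\C^n)$, of total dimension $m\binom{n+1}{2}$.

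The main obstacle will be this non-defectivity statement in case (1). The standard attack would combine Terracini with a Horace-type degeneration (specializing the points $[q_i^3]$ into a configuration whose Hilbert function can be controlled inductively in $n$), and the restriction $n > 16$ most likely reflects either the range where the Horace induction produces the sharp bound or the range where the Casarotti criterion becomes strong enough to supersede the combinatorial bound of case (2). In case (2), the analogous technical bottleneck is the explicit description of $\Sigma_T$; for small $n$ this can be handled by direct geometric analysis (and, if necessary, symbolic computer algebra), but the argument must be uniform across all the pairs $(n,m)$ with $n \le 16$ and $m \le \binom{n}{2}+1$. Both reductions ultimately amount to explicit rank or span computations in $S^6(\C^n)$, which is where the technical work will concentrate.
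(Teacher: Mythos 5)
Your high-level architecture matches the paper exactly: Chiantini--Ottaviani's tangential contact locus for case (2), and the Casarotti--Mella ``non-defectivity implies identifiability'' criterion for case (1), with the $n>16$ threshold indeed coming from the numerical hypothesis of the latter (one needs $m > 2\dim W$ with $\dim W = \binom{n+1}{2}-1$). However, both of the steps you flag as ``where the technical work will concentrate'' are precisely the content of the proof, and where you sketch how to supply them you diverge from what actually works.

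For case (1), you propose to prove non-defectivity by a Horace-type degeneration. The paper does not do this, and a Horace argument for powers of quadrics is an open-ended project in its own right. Instead, non-defectivity in the stated range follows directly from Nenashev's theorem on Hilbert series of ideals generated by $m$ generic elements of a subvariety of $S^{(d-1)k}(\C^n)$ closed under linear transformations (applied to the tangential variety $\{p^{d-1}h\}$): this gives that $(q_1^2,\ldots,q_m^2)_6$ has the expected codimension, i.e.\ the Terracini spaces $q_i^2\cdot S^2(\C^n)$ are in direct sum, whenever $m \le \binom{n+5}{6}/\binom{n+1}{2} - \binom{n+1}{2}$. Without this input (or an equivalent one) your case (1) is not a proof.

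For case (2), you propose to describe the contact locus $\Sigma_T$ for \emph{general} $q_1,\ldots,q_m$ and show it forces $q \in \C q_i$. This is substantially harder than necessary and is not the paper's route. The Chiantini--Ottaviani criterion has a semicontinuity clause: it suffices to exhibit a \emph{single specific} configuration of $m$ points with skew tangent spaces whose tangential contact locus is zero-dimensional at the chosen points. The paper constructs such a witness explicitly, the ``binomial set'' $\{(X_i+X_j)^2 : i<j\}\cup\{X_1^2\}$ of cardinality $\binom{n}{2}+1$, and proves both skewness and zero-dimensionality of the contact locus by induction on $n$ via the substitutions $X_n\mapsto X_1$, $X_{n-1}\mapsto X_1$, with base cases $n\le 5$ verified by computer. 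Note also that this argument is uniform in \emph{all} $n$ (there is no case analysis over $n\le 16$); the restriction $n\le 16$ in the theorem statement appears only because case (1) gives a stronger bound beyond that point. As written, your proposal identifies the correct criteria but leaves the two decisive constructions unsupplied, so it has a genuine gap.
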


\begin{figure}[h]
	\begin{center}
		\includegraphics[scale=0.35]{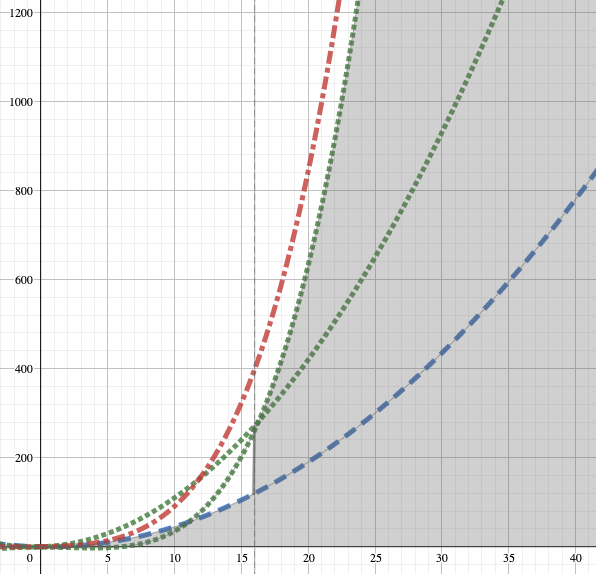}
		\caption{The ranges covered by the main result (\Cref{thm:main-result-cubes-quadratics}) while letting the number of variables $n$ increase along the horizontal axis. The green-dotted curves refer to condition $(1)$: note that the result holds only for $n > 16$ because it requires that ${n+5 \choose 6}/{n+1 \choose 2}-{n+1 \choose 2}-1 > 2\left({{n+1}\choose{2}}-1\right)$ (cfr. \Cref{cor:identifiability_arbitrary}). The blue-dashed line refers to condition $(2)$ (cfr. \Cref{sec:quadratic-identifiability}). The red-dashed-dotted line is the expected number of cubes needed to write a general complex sextic as their sum, i.e., $\left\lfloor \dim S^6(\C^n) / \dim S^2(\C^n) \right\rfloor$: recall that if there are more addends than that number generic identifiability is impossible by dimensionality.}
		\label{fig:result}
	\end{center}
\end{figure}

\noindent
With the connection explained in \Cref{sec:Mixtures-of-Gaussians}, we directly obtain the following consequence about degree-$6$ moments of mixtures of general centered Gaussians. 

\begin{cor}
	\label{thm:main-result-gaussian-mixtures}
	Let $ n, m\in \N $ such that one of the following holds:
	\begin{enumerate}\label{eq:main-result-condition}
		\item $n > 16$ and $m\le {n+5 \choose 6}/{n+1 \choose 2}-{n+1 \choose 2}-1$; or
		\item $n \leq 16$ and $m\le {n\choose 2} + 1$.
	\end{enumerate}
	Let $ Y_1 \sim \mathcal{N}(0, q_1),\ldots, Y_m \sim \mathcal{N}(0, q_m) $ centered normal distributions given by general psd covariance forms $ q_1,\ldots, q_m \in S^2(\R^n)$. Let $ Z_1,\ldots, Z_m $ be any other centered Gaussian random vectors on $ \R^n $ such that one of the following holds:
	\begin{enumerate}[(a)]
		\item the uniformly weighted Gaussian mixtures $ Y = \frac{1}{m} (Y_1 \oplus \ldots \oplus Y_m) $ and $ Z = \frac{1}{m}(Z_1 \oplus \ldots \oplus Z_m) $ agree on the moments of degree $ 6 $; or
		\item for general $ \lambda_1,\ldots, \lambda_m \in \R_{> 0} $ and for $ \mu_1,\ldots, \mu_m\in \R_{\ge 0} $ both summing up to $ 1 $, the Gaussian mixtures $ Y = \lambda_1 Y_1 \oplus \ldots \oplus \lambda_m Y_m $ and $ Z = \mu_1 Z_1 \oplus \ldots \oplus \mu_m Z_m $ agree on the moments of degree $ 6 $ and $ 4 $.
	\end{enumerate}
	Then $ \{Y_1,\ldots, Y_m\} = \{Z_1,\ldots, Z_m\} $ and $ Y = Z $. In case (b), the corresponding mixing weights are equal, too. 
\end{cor}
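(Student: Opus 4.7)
The plan is to reduce both statements to \Cref{thm:main-result-cubes-quadratics} via the moment-tensor/polynomial dictionary: for $Y\sim\mathcal{N}(0,\Sigma)$ with associated quadratic form $q_\Sigma(x)=x^T\Sigma x$, the moment generating function $\mathbb{E}[\exp(\langle x,Y\rangle)]=\exp(\tfrac12 q_\Sigma(x))$ shows that the degree-$2d$ moment polynomial of $Y$ equals $\tfrac{1}{2^d d!}q_\Sigma(x)^d$ up to the combinatorial factor $(2d)!$. Consequently, the degree-$2d$ moment of any weighted Gaussian mixture $\sum_i w_i Y_i$ is proportional to $\sum_i w_i q_i^d$, and equality of mixture moments of a fixed even degree is equivalent to a polynomial identity of the type treated by \Cref{thm:main-result-cubes-quadratics}.

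For case (a), the hypothesis rewrites as $\sum_{i=1}^m q_i^3=\sum_{j=1}^m r_j^3$, where $r_j(x)=x^T\Sigma_j^{(Z)}x$ is the quadratic form of $Z_j$. Since the $q_i$ are general, \Cref{thm:main-result-cubes-quadratics} applies and forces a permutation $\sigma$ and cube roots of unity $\zeta_j$ such that $r_{\sigma(j)}=\zeta_j q_j$. Because both sides are real symmetric forms and the $q_j$ are generically nonzero, the only admissible $\zeta_j$ is $1$. Hence $\{r_j\}=\{q_j\}$ as multisets, $\{Z_j\}=\{Y_j\}$, and the uniform mixtures coincide.

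For case (b), I would absorb the weights into the forms by setting $\tilde q_i:=\lambda_i^{1/3}q_i$ and $\tilde r_j:=\mu_j^{1/3}r_j$, using the real positive cube roots (legitimate since $\lambda_i>0$ and $\mu_j\ge 0$). The degree-$6$ hypothesis becomes $\sum_i \tilde q_i^3=\sum_j \tilde r_j^3$, and the tuple $(\tilde q_i)$ is still general since $(\lambda_i,q_i)$ is. \Cref{thm:main-result-cubes-quadratics} applies, and the reality argument eliminates the cube-root ambiguity again, yielding a permutation $\sigma$ with
\begin{equation*}
	\mu_{\sigma(i)}^{1/3} r_{\sigma(i)} = \lambda_i^{1/3} q_i \qquad (i=1,\dots,m).
\end{equation*}
In particular every $\mu_{\sigma(i)}>0$, for otherwise $\lambda_i q_i=0$, contradicting $\lambda_i>0$ and the genericity of $q_i$. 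To recover the weights, I would then invoke the degree-$4$ hypothesis $\sum_i\lambda_i q_i^2=\sum_j\mu_j r_j^2$: squaring the relation above gives $\mu_{\sigma(i)}^{2/3}r_{\sigma(i)}^2=\lambda_i^{2/3}q_i^2$, and substituting yields
\begin{equation*}
	\sum_{i=1}^m \lambda_i^{2/3}\bigl(\mu_{\sigma(i)}^{1/3}-\lambda_i^{1/3}\bigr)q_i^2 = 0.
\end{equation*}
Since squares of quadratic forms span $S^4(\C^n)$ by polarization and $m$ is well below $\binom{n+3}{4}$, the $q_i^2$'s are linearly independent for general $q_i$. Each coefficient therefore vanishes, so $\mu_{\sigma(i)}=\lambda_i$ and $r_{\sigma(i)}=q_i$, and the weighted mixtures agree.

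The main obstacle is the careful handling of the cube-root-of-unity ambiguity in \Cref{thm:main-result-cubes-quadratics}: it has to be killed by a reality argument in both cases, and in case (b) one must additionally ensure that the rescaled tuple $(\tilde q_i)$ remains in the generic locus to which the theorem applies, that no component weight collapses to zero, and that the fourth-order moment then truly separates the weights. Once these reality/positivity issues are handled, the remaining linear-algebraic step using the degree-$4$ moment is straightforward in the generic regime of our hypotheses.
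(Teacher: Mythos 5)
Your proposal is correct and follows essentially the same route as the paper: translate equality of the degree-$2d$ mixture moments into the identity $\sum_i(\sqrt[d]{\lambda_i}q_i)^d=\sum_j(\sqrt[d]{\mu_j}p_j)^d$, apply \Cref{thm:main-result-cubes-quadratics} (via Zariski-density of psd forms, the paper's \Cref{rem:psd-zariski-dense}, to pass from general real psd to general complex), kill the root-of-unity ambiguity by reality, and then recover the weights in case (b) from the degree-$4$ moments using linear independence of the $q_i^2$. The only cosmetic difference is that the paper substitutes $p_i=\alpha_i q_i$ to get one scalar equation per index, while you phrase the same step as the vanishing of a linear combination of the $q_i^2$; both hinge on the same linear-independence fact.
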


Condition $(1)$ of \Cref{thm:main-result-cubes-quadratics} may be generalized to guarantee generic identifiability for sums of arbitrary powers of high degree forms (cfr. \Cref{cor:identifiability_arbitrary}). We highlight it here in the case of powers of quadratic forms, which is the case that is relevant for mixtures of centered Gaussians.
\begin{thm}\label{thm:generalD}
	Fix $d \in \N$. For any $n,m \in \N$ such that
	\begin{equation}\label{eq:generalD}
		3{n+1 \choose 2}^2 - 2{n+1 \choose 2} < {n-1+2d \choose 2d}
	\end{equation}
	and 
	\[
		m \leq {n-1+2d \choose 2d}/{n+1 \choose 2}-{n+1 \choose 2}-1
	\]	
	Then, for general $q_1,\ldots, q_m \in S^2(\C^n) $, the degree-$2d$ form $t = \sum_{i=1}^m q_i^d$ has a unique representation as a sum of $ m $ $d$-th powers of quadratic forms, up to permutation and third roots of unity. 
\end{thm}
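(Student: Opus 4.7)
The plan is to follow the same two-step strategy used to establish condition $(1)$ of \Cref{thm:main-result-cubes-quadratics}, replacing $d=3$ by an arbitrary $d \ge 3$. Set $X := V_{n,d} \subset \PP(S^{2d}(\C^n))$ to be the projective variety parametrizing the $d$-th powers $[q^d]$ of quadratic forms $q \in S^2(\C^n)$; it is irreducible, non-degenerate, and of dimension $n_X := \binom{n+1}{2}-1$, inside $\PP^N$ with $N+1 = \binom{n-1+2d}{2d}$. The bound $m \le \binom{n-1+2d}{2d}/\binom{n+1}{2} - \binom{n+1}{2}-1$ is precisely of the shape $m \le (N+1)/(n_X+1) - (n_X+1) - 1$, which matches the output of the non-defectivity-to-identifiability criterion of \cite{casarotti2022non}, while the auxiliary condition $3\binom{n+1}{2}^2 - 2\binom{n+1}{2} < \binom{n-1+2d}{2d}$ just ensures that this bound is large enough for that criterion to produce a non-vacuous statement.

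The first step is to prove that the higher secant varieties $\sigma_m(X)$ are non-defective in the range of interest. By Terracini's lemma, the affine tangent space at a general point $\sum_{i=1}^m q_i^d \in \sigma_m(X)$ is the sum $\sum_{i=1}^m q_i^{d-1} \cdot S^2(\C^n) \subseteq S^{2d}(\C^n)$, so non-defectivity amounts to the linear independence
\[
\dim \sum_{i=1}^m q_i^{d-1} \cdot S^2(\C^n) = m\binom{n+1}{2}
\]
for general $q_1,\dots,q_m$. I would establish this by semicontinuity, exhibiting one favourable configuration and then extending by openness. A natural attempt is to specialize the $q_i$ to rank-one quadrics $\ell_i^2$, turning the question into a cohomological statement about the Veronese of $\PP^{n-1}$ and allowing one to invoke an Alexander--Hirschowitz-type vanishing; alternatively, one proceeds inductively on $d$, using the case $d-1$ as the base.

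The second step then invokes the theorem of \cite{casarotti2022non}: once $\sigma_m(X)$ is non-defective throughout the stated range, a general element of $\sigma_m(X)$ admits, up to reordering, a unique decomposition as $[q_1^d] + \cdots + [q_m^d]$ with $[q_i^d] \in X$. Lifting through the $d$-to-$1$ parametrization $q \mapsto q^d$ yields uniqueness of $q_1,\dots,q_m$ up to permutation and $d$-th roots of unity, as claimed. The principal difficulty in executing this plan lies in the first step: for $d=3$ the tangential contact locus description of \cite{Chiantini_Ottaviani_2012} streamlines the non-defectivity check, but for general $d$ one must either carry out a delicate specialization argument or run an induction on $d$ whose cohomological vanishing input at each step is the most intricate part.
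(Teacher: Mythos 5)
Your overall architecture is the same as the paper's: prove non-defectivity of $\sigma_m(V_{2,d})$ in the stated range and then feed this into the non-defectivity-to-identifiability theorem of \cite{casarotti2022non}, with the inequality $3\binom{n+1}{2}^2-2\binom{n+1}{2}<\binom{n-1+2d}{2d}$ playing exactly the role you assign to it (it is the condition $m>2\dim W$ required by that theorem). The problem is that the entire mathematical content of the statement sits in the non-defectivity step, and that is precisely the step you leave open, with two proposed routes that do not work as described. Specializing $q_i\mapsto \ell_i^2$ turns the Terracini condition $\dim\sum_i q_i^{d-1}S^2(\C^n)=m\binom{n+1}{2}$ into the statement that the degree-$2d$ part of the ideal $(\ell_1^{2d-2},\dots,\ell_m^{2d-2})$ has maximal dimension, which by apolarity is equivalent to $m$ general \emph{triple} points of $\PP^{n-1}$ imposing independent conditions on degree-$2d$ forms. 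The Alexander--Hirschowitz theorem \cite{hirschowitz1995polynomial} covers only double points; the analogous question for triple points is not a routine ``Alexander--Hirschowitz-type vanishing'' but a largely open interpolation problem, and in any case the specialization could a priori be more degenerate than the general configuration. The alternative ``induction on $d$'' is not specified to the point where one could check it. So as written, the proposal does not prove the theorem.

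The paper closes this gap differently and essentially for free: it invokes Nenashev's theorem \cite{nenashev2017note} on Hilbert series of ideals generated by $m$ general elements of a variety $\mathcal{D}\subset S^{a}(\C^n)$ that is closed under linear changes of coordinates, applied with $a=(d-1)k$, $h=k$ and $\mathcal{D}$ the tangential variety $\{p^{d-1}h\}$ of $V_{k,d}$. That result states that the coefficient of $T^{a+h}$ in the Hilbert series is the one predicted by Fr\"oberg's formula whenever $m\le \dim\C[X]_{a+h}/\dim\C[X]_h-\dim\C[X]_h$, which for $(k,h)=(2,2)$ is exactly the claim that $T_{q_1^d}V_{2,d},\dots,T_{q_m^d}V_{2,d}$ are skew for $m\le\binom{n-1+2d}{2d}/\binom{n+1}{2}-\binom{n+1}{2}$. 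If you replace your sketched specialization/induction by a citation of this result, your argument becomes the paper's proof. (One further minor point in your favour: the uniqueness is indeed up to $d$-th roots of unity, as you write; the ``third roots of unity'' in the statement is an artifact of the $d=3$ case.)
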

With the connection explained in \Cref{sec:Mixtures-of-Gaussians}, we obtain an analogous of \Cref{thm:main-result-gaussian-mixtures} for moments of degree $2d$. 

	\begin{figure}[ht]
		\includegraphics[scale=0.6]{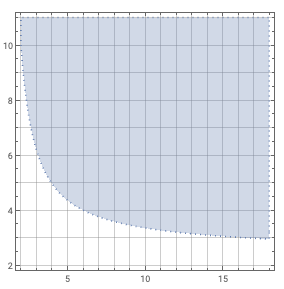}
		\caption{The region of pairs $(n,d)$ satisfying inequality \eqref{eq:generalD} and for which \Cref{thm:generalD} holds. }
		\label{fig:generalD}
	\end{figure}

\subsection{Outline of this paper} 
In \Cref{sec:Mixtures-of-Gaussians}, we give a concise explanation of the connection between the decomposition problem for cubes of quadratics and the moment problem for mixtures of centered Gaussians. In \Cref{sec:prelims} we recall the basic facts on the theory of secant varieties while in \Cref{sec:identifiability} we recall the general identifiability results that are then used to prove the main results. Some parts of our analysis are verified on a computer. The computation is done with the \texttt{Julia} programming language \cite{Julia-2017} and the \texttt{MultivariatePolynomials.jl} library \cite{legat2021multivariatepolynomials}. Our code can be found on GitHub, see \cite{Taveira_Blomenhofer_Code_Cubes_Of_Quadratics_2022}.   

\subsection{Related work} 
On the algorithmic side, Ge, Huang and Kakade provided an algorithm that learns the parameters of mixtures of $ n $-variate Gaussians of rank at most $ \mathcal{O}(\sqrt{n}) $ from their moments of degree at most 6 (see \cite{Ge_Huang_Kakade_2015}). The paper considers a smoothed analysis framework, which is essentially a numerically stable way of saying that the quadratics should be in general position. Mixtures of \emph{centered} Gaussians were recently studied in \cite{Garg_Kayal_Saha_2020}, with a particular focus towards their relation to the decomposition problem for sums of powers of quadratics as well as circuit complexity. The work of Garg, Kayal and Saha \cite{Garg_Kayal_Saha_2020} also studies the more general circuit model of sums of powers of low degree polynomials and was in fact a major motivation for the present work. Sums of powers have been studied from the point of view of algebraic geometry, e.g. \cite{Froeberg_Ottaviani_Shapiro_2012, Lundqvist_Oneto_Reznick_Shapiro_2019}, and recently appeared also in relation to polynomial neural networks \cite{Kileel_Trager_Bruna_2019}.

In the usual terminology of additive decompositions, fixed positive integers $d$ and $k$, the $ (k, d) $-\emph{rank} (or short \emph{rank}) of a degree-$dk$ form is the smallest number of degree-$k$ forms needed to write the given form as their sum of their $ d $-th powers. From an algebro-geometric point of view, additive decompositions of polynomials are studied through secant varieties. In our context, the $m$-th secant variety is the (Zariski) closure of the set of polynomials of degree $dk$ of rank at most $m$. 

Several interesting open questions are yet to be answered for decompositions of homogeneous polynomials as sums of powers. First of all, what is the general rank, i.e., the smallest number of $d$-th powers needed to write a general form of degree $dk$ as their sum? For $k = 1$, i.e., sums of powers of linear forms, it is worth recalling that the question is answered by the celebrated Alexander-Hirschowitz Theorem \cite{hirschowitz1995polynomial} (see also \cite{Brambilla_Ottaviani_2008}). For $k \geq 2$, the complete answer is given in the cases of binary forms and sums of squares in three and four variables in \cite{Lundqvist_Oneto_Reznick_Shapiro_2019}. In general, it is conjectured that the generic rank is as expected by dimension count $\left\lceil {{n+dk-1 \choose dk}}/{{n+k-1\choose k}} \right\rceil$ unless $d = 2$ (cfr. \cite[Conjecture 1.2]{Lundqvist_Oneto_Reznick_Shapiro_2019}). The latter question would be answered by knowing dimensions of all secant varieties.

As for general identifiability, i.e., uniqueness of the decomposition for a general point of a secant variety, a complete answer is given in the case of sums of powers of linear forms ($k=1$): In \cite{Chiantini_Ottaviani_Vannieuwenhoven_2016} it was shown that in all but a few exceptional cases identifiability holds for all subgeneric ranks, while Galuppi \cite{galuppi2019identifiability} completed the classification of cases in which identifiability holds also for generic ranks.  For $k \geq 2$, as far as we know, before the present work, only identifiability for sextics as sums of cubes was recently addressed for rank $ 2 $ \cite{Reznick_2021}. 

Generic identifiability in the range (1) of \Cref{thm:main-result-cubes-quadratics} is proved employing a result by Casarotti and Mella \cite{casarotti2022non}, which translates the study of general identifiablity to the study of dimensions of secant varieties under certain  constraints on rank and dimension. The dimension of the secant varieties of varieties of powers is given by the main result of \cite{nenashev2017note}.  In the range (2) of  \Cref{thm:main-result-cubes-quadratics}, our analysis employs the geometric notion of weak defectivity and tangential contact loci due to Chiantini and Ciliberto \cite{Chiantini_Ciliberto_2002,Chiantini_Ciliberto_2010} and is based on a series of works due to Chiantini, Ottaviani and Vannieuwenhoven \cite{Chiantini_Ottaviani_2012,Chiantini_Ottaviani_Vannieuwenhoven_2014,Chiantini_Ottaviani_Vannieuwenhoven_2016}, where the authors examine the question of \emph{generic tensor identifiability}, i.e., under which conditions does a general tensor of fixed rank $ m $ have a unique decomposition as a sum of $ m $ simple (rank-$1$) tensors. Symmetric tensor decomposition corresponds to the decomposition problem for powers of \emph{linear} forms.

Specifically for cubes of linear forms in $ n $ variables, the generic rank is in $ \theta(n^2) $, whereas efficient algorithms for the decomposition problem succeeding in the smoothed analysis framework are known up to rank $ n $ (e.g. \cite{Anandkumar_Ge_Hsu_Kakade_Telgarsky_2012}, with the original idea dating back to Jennrich, published via Harshman \cite{Harshman_1970}). Various other algorithms for symmetric tensor decomposition of order $ 3 $ exist. Some efficient algorithms can exceed the rank-$ n $ threshold for ``average case'' problems and go up to rank almost $ n^{1.5} $ (e.g. \cite{Ge_Ma_2015}, \cite{Ma_Shi_Steurer_2016}), by relying on the assumption that the rank-$ 1 $ components are drawn from a friendly distribution. Other algorithms can produce decompositions for all subgeneric ranks, but sacrifice  computational efficiency, cf. e.g. the work of Bernardi and Taufer \cite{Bernardi_Taufer_2020}. This leaves a multiplicative gap of $ \theta(n) $ between the regime where generic identifiability holds and the regime where the rank-$ 1 $ components can be efficiently computed in a smoothed analysis framework. 

For quadratic forms, the threshold of $ \mathcal{O}(\sqrt{n}) $ due to Ge, Huang and Kakade \cite{Ge_Huang_Kakade_2015} might not be the final answer either. One might conjecture that efficient algorithms are possible at least as long as the rank is at most the number of variables. Any algorithm succeeding for superquadratic rank $ m \gg \dim S^2(\C^n) $ would by the aforementioned also have nontrivial implications on tensor decomposition, as clearly from a $ 3 $-tensor $ \sum_{i=1}^m q_i^{\ot 3} \in S^3(S^2(\C^n))$ we may compute the $ 6 $-form $ \sum_{i=1}^m q_i^3 $ by applying a linear map. This explains our focus towards finding specific witnesses for quadratic-rank generic identifiability in \Cref{sec:quadratic-identifiability}, although we stress that our results do \emph{not} have algorithmic consequences. 

\section*{Acknowledgements}
We thank Pravesh Kothari for giving inspiring questions and encouraging comments as well as pointing us to the work of Garg, Kayal and Saha \cite{Garg_Kayal_Saha_2020}. We thank Nick Vannieuwenhoven for helpful explanations and both Boris Shapiro and an anonymous reader for telling us about additional, very interesting references. We thank Joseph Landsberg and Laurent Manivel for organizing a "Workshop on geometry and complexity theory" where two of the authors had the opportunity to exchange ideas.


\section{Sums of Powers of Quadratics and Mixtures of Centered Gaussians}\label{sec:Mixtures-of-Gaussians}
\begin{notation*}
	For any field $\F$, let $X = (X_1,\ldots,X_n)$ be a set of variables of $\F^n$. Let $\F[X] = \F[X_1,\ldots,X_n] = \bigoplus_{d \geq 0} \F[X]_d$ be the standard graded polynomial ring where $\F[X]_d$ is identified with $S^d(\F^n)$. 
\end{notation*}

An $ n $-variate Gaussian normal distribution $ \mathcal{N}(\ell, q) $ on $ \R^n $ is given by a pair $ (\ell, q) $ where  $ \ell\in \R[X]_{1} $ is a linear form and  $ q\in \R[X]_{2} $ a positive semi-definite (psd) quadratic form. Some authors require $ q $ to be positive definite, but even if $ q $ has a nontrivial kernel, the pair $ (\ell, q) $ still defines a normal distribution on the affine subspace given by the \emph{mean  vector} $(\ell(e_1),\ldots, \ell(e_n))$ ($e_i$ is the $i$-th coordinate vector) plus the orthogonal complement of the kernel of $ q $ (in the maximal degenerate case, i.e. when $ q = 0 $, this definition gives the Dirac distribution at the mean vector). 

The moments of a random variable $ Y $ can be used to construct a formal power series
\[ 
	\mathbb E [\exp(\langle X,  Y \rangle)] := \sum_{d=0}^\infty \frac{1}{d!}  \mathbb E[\langle X, Y \rangle^d ]\in \R[[X]]
\] 
which is called the \emph{moment generating series} of $ Y $. The expectation $ \mathbb E $ is taken $ X $-coefficient-wise  over the randomness of $ Y $. For the case of a normally distributed random variable $ Y\sim \mathcal{N}(\ell, q) $, this power series takes the very simple and convenient representation
\begin{align}
	\mathbb E [\exp(\langle X,  Y \rangle)] = \exp(\ell + q) = \sum_{d=0}^\infty  \frac{1}{d!}  (\ell + q)^d \label{eq:moment-generating-function-gauss}
\end{align}  
from which we can read all moments by comparing coefficients.  In the case of a centered Gaussian distribution, we have $ \ell = 0 $ and therefore for each $ d\in \N_{>0} $ the moments of degree $ 2d $ are just simply the coefficients of $q^d$. All odd-order moments are zero for a centered Gaussian. 

A \emph{mixture $ Y $ of $ m $ Gaussian} random vectors $ Y_1,\ldots, Y_m $ on $ \R^n $ with mixing weights $ \lambda_1,\ldots, \lambda_m \in \R_{\ge 0}$ satisfying $ \sum_{i=1}^m \lambda_i = 1 $ is a random vector sampled as follows: From a box containing indices $ \{1,\ldots, m\} $, draw the index $ i $ with probability $ \lambda_i $ and then take a sample of $ Y_i $. 
It is easy to see that for any integrable function $ f $ taking values in $ \R^n $, $ \mathbb E[f(Y)] = \sum_{i=1}^m \lambda_i \mathbb E[f(Y_i)] $. 
We therefore suggestively denote $ Y = \lambda_1 Y_1 \oplus \ldots \oplus \lambda_m Y_m $, to remind that on the level of moments and expectations, a mixture random variable is essentially just a convex combination. ``$ \oplus $'' should not be confused with the actual addition on $ \R^n $. Let the Gaussian random vectors be given by pairs of linear and psd quadratic forms $ (\ell_1, q_1),\ldots, (\ell_m, q_m) $. The \emph{moment problem for mixtures of Gaussians} asks to obtain these parameter forms given the moments of the mixture up to a certain degree. In the special case of \emph{centered} Gaussians, i.e. $ \ell_1 = \ldots = \ell_m = 0 $, the degree $ 2d $ moments of $ \lambda_1 Y_1 \oplus \ldots \oplus \lambda_m Y_m $ attain the form 
\begin{align}\label{eq:gm-moments}
	\sum_{i=1}^m \lambda_i q_i^d
\end{align}
for each $ d\in \N_0 $. Since $ \lambda q^d = (\sqrt[d]{\lambda}q)^d $ for each $ \lambda\in \R_{\ge 0}, q\in S^2(\C^n) $, the expression in \Cref{eq:gm-moments} is overparameterized as long as we are only considering moments of one, fixed degree. Let us set the problem in the uniform case, i.e. assume that $ \lambda_1 = \ldots = \lambda_m = \frac{1}{m} $. Then, the problem corresponds to identifiability for $ d $-th powers of \emph{real psd} quadratic forms. 

\begin{remark}\label{rem:psd-zariski-dense}
	Note that the psd quadratic forms are a (Zariski) dense subset of $ S^2(\R^n) $. Since the map $ (q_1,\ldots, q_m) \mapsto \sum_{i=1}^m q_i^d $ is given by rational polynomials, its image when restricted to real points (or even rational points) is (Zariski) dense in its complex image. Therefore, it suffices to show generic identifiability for $ d $-th powers of \emph{complex} quadratic forms.
\end{remark}
 
Note that $ 6 $ (i.e., $d = 3$) is the smallest degree for which we can hope for identifiability. Indeed, the case of sums of squares $(d = 2)$ is not identifiable due to the identity 
\[
	q_1^2 + q_2^2 = \frac{1}{2}(q_1+q_2)^2 + \frac{1}{2}(q_1-q_2)^2.
\]


Before proving our main result \Cref{thm:main-result-cubes-quadratics} on general identifiability of sextics of certain subgeneric ranks as sums of cubes, we show that indeed it allows us to prove our \Cref{thm:main-result-gaussian-mixtures} on identifiability of mixtures of centered Gaussians. We give the reduction for arbitrary $ d $ rather than $ d = 3 $ and remark that it indeed proves \ref{thm:main-result-gaussian-mixtures} 

\begin{thm} 
	Let $ n, m, d\in \N $ such that generic identifiability holds for $ d $-th powers of quadratic forms in $ n $ variables of rank $ m $ (cfr. e.g. \ref{thm:main-result-cubes-quadratics} and \ref{cor:identifiability_arbitrary})
	
	Let $ Y_1 \sim \mathcal{N}(0, q_1),\ldots, Y_m \sim \mathcal{N}(0, q_m) $ centered normal distributions given by general psd covariance forms $ q_1,\ldots, q_m \in S^2(\R^n)$. Let $ Z_1,\ldots, Z_m $ be any other centered Gaussian random vectors on $ \R^n $ such that one of the following holds:
	\begin{enumerate}[(a)]
		\item the uniformly weighted Gaussian mixtures $ Y = \frac{1}{m} (Y_1 \oplus \ldots \oplus Y_m) $ and $ Z = \frac{1}{m}(Z_1 \oplus \ldots \oplus Z_m) $ agree on the moments of degree $ 2d $; or
		\item for general $ \lambda_1,\ldots, \lambda_m \in \R_{> 0} $ and for $ \mu_1,\ldots, \mu_m\in \R_{\ge 0} $ both summing up to $ 1 $, the Gaussian mixtures $ Y = \lambda_1 Y_1 \oplus \ldots \oplus \lambda_m Y_m $ and $ Z = \mu_1 Z_1 \oplus \ldots \oplus \mu_m Z_m $ agree on the moments of degree $ 2d $ and $ 2d-2 $.
	\end{enumerate}
	Then $ \{Y_1,\ldots, Y_m\} = \{Z_1,\ldots, Z_m\} $ and $ Y = Z $. In case (b), the corresponding mixing weights are equal, too. 
\end{thm}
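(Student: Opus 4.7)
The plan is to reduce each case to the identifiability hypothesis for sums of $d$-th powers of quadratic forms. By \eqref{eq:gm-moments}, the degree-$2d$ moment of a centered Gaussian mixture $\lambda_1 Y_1 \oplus \ldots \oplus \lambda_m Y_m$ with $Y_i \sim \mathcal{N}(0, q_i)$ is a scalar multiple of $\sum_i \lambda_i q_i^d$. Write $q_i'$ for the covariance forms of the $Z_i$; these are real psd by the hypothesis that the $Z_i$ are real centered Gaussians. Each moment equality then becomes a polynomial identity to which the identifiability hypothesis can be applied.

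For case (a), matching degree-$2d$ moments yields $\sum_{i=1}^m q_i^d = \sum_{i=1}^m (q_i')^d$. By \Cref{rem:psd-zariski-dense}, $(q_1, \ldots, q_m)$ is Zariski-general in $S^2(\mathbb{C}^n)^m$, so the identifiability hypothesis produces a permutation $\sigma$ and $d$-th roots of unity $\zeta_i$ with $q_i' = \zeta_i q_{\sigma(i)}$. Reality and positive semidefiniteness of both $q_i'$ and $q_{\sigma(i)}$, combined with $q_{\sigma(i)} \neq 0$, force $\zeta_i = 1$. Hence $q_i' = q_{\sigma(i)}$ and consequently $\{Y_1,\ldots, Y_m\} = \{Z_1,\ldots, Z_m\}$ and $Y = Z$.

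For case (b), the degree-$2d$ and degree-$(2d-2)$ moment identities become
\[
\sum_{i=1}^m \lambda_i q_i^d = \sum_{i=1}^m \mu_i (q_i')^d \qquad \text{and} \qquad \sum_{i=1}^m \lambda_i q_i^{d-1} = \sum_{i=1}^m \mu_i (q_i')^{d-1}.
\]
Absorbing the weights via $\lambda q^d = (\lambda^{1/d} q)^d$, the first is a sum of $m$ $d$-th powers on each side, and applying the identifiability hypothesis to the general tuple $(\lambda_i^{1/d} q_i)_i$ gives, after permutation, $\mu_i^{1/d} q_i' = \zeta_i \lambda_i^{1/d} q_i$ for some $d$-th roots of unity $\zeta_i$. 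As before, reality and psd-ness force $\zeta_i = 1$, so $q_i' = (\lambda_i/\mu_i)^{1/d}\, q_i$. Plugging this into the $(2d-2)$-identity gives
\[
\sum_{i=1}^m \lambda_i q_i^{d-1} = \sum_{i=1}^m \mu_i^{1/d} \lambda_i^{(d-1)/d}\, q_i^{d-1},
\]
and linear independence of $q_1^{d-1}, \ldots, q_m^{d-1}$ for general $q_i$ --- automatic in the identifiability range, since $m$ is well below $\dim S^{2d-2}(\mathbb{C}^n)$ --- yields $\lambda_i = \mu_i$, and therefore $q_i' = q_i$.

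The main subtlety throughout is the passage from the complex identifiability statement, which determines the $q_i'$ only up to $d$-th roots of unity, to a genuine real equality of Gaussian covariance forms; this is handled by the simple observation that a nonzero real psd form cannot coincide with a nontrivial complex scalar multiple of another such form. The remainder is essentially bookkeeping with the moment formula \eqref{eq:gm-moments}.
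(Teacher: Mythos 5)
Your proof is correct and follows essentially the same route as the paper: reduce both cases to the polynomial identifiability statement via the moment formula \eqref{eq:gm-moments}, absorb the mixing weights into the quadratics, and in case (b) recover the weights from the degree-$(2d-2)$ moments using linear independence of the $q_i^{d-1}$. Your explicit handling of the $d$-th-roots-of-unity ambiguity via realness and positive semidefiniteness is a welcome extra detail that the paper's proof leaves implicit, but it does not change the argument.
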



\begin{proof}[Proof of \Cref{thm:main-result-gaussian-mixtures}]
	Let $ p_1,\ldots, p_m \in S^2(\R^n)$ be quadratic psd forms such that
	\[ 
		Z_1\sim \mathcal{N}(0, p_1),\ldots, Z_m\sim \mathcal{N}(0, p_m)
	\]
	
	In case (a), we denote $ \lambda_i := \mu_i := \frac{1}{m} $ for each $ i\in \{1,\ldots, m\} $, while in case (b) we fix $ \lambda_1,\ldots, \lambda_m $ and $ \mu_1,\ldots, \mu_m $ accordingly. Knowing that the degree $ 2d $ moments of $  Y = \lambda_1 Y_1 + \ldots + \lambda_m Y_m $ and $ Z = \mu_1 Z_1 + \ldots + \mu_m Z_m $ are equal, by \Cref{eq:gm-moments} we have 
	\begin{align}\label{eq:main-result-gaussian-mixtures-restatement} 
		\sum_{i=1}^m (\sqrt[d]{\lambda_i}q_i)^d  = \sum_{i=1}^m (\sqrt[d]{\mu_i}p_i)^d 
	\end{align}
	where the quadratic forms $ \sqrt[d]{\lambda_i}q_i $ are general for each $ i\in \{1,\ldots, m\} $. By \Cref{thm:main-result-cubes-quadratics} and \Cref{rem:psd-zariski-dense}, we get 
	\[ 
		\{\sqrt[d]{\lambda_1}q_1,\ldots, \sqrt[d]{\lambda_m}q_m\} = \{\sqrt[d]{\mu_1}p_1,\ldots, \sqrt[d]{\mu_m}p_m\} 
	\]
	Note that for case (a) this is enough to conclude. In case (b), without loss of generality let us assume that $$ \sqrt[d]{\lambda_1}q_1 = \sqrt[d]{\mu_1}p_1  ,\ldots,   \sqrt[d]{\lambda_m}q_m = \sqrt[d]{\mu_m}p_m $$ and write $ \alpha_1 := \sqrt[d]{\frac{\lambda_1}{\mu_1}},\ldots, \alpha_m := \sqrt[d]{\frac{\lambda_m}{\mu_m}}$. Since the degree $ 2d-2 $ moments of $ Y $ and $ Z $ agree, we have 
	\[ 
		\sum_{i=1}^m \lambda_i q_i^{d-1} = \sum_{i=1}^m \mu_i p_i^{d-1}
	\]
	where the $ (d-1) $-st powers of the quadratic forms $ q_1^{d-1},\ldots, q_m^{d-1} $ are linearly independent. Substituting $ p_i = \alpha_i q_i $, we obtain
	\[ 
		\sum_{i=1}^m \lambda_i q_i^{d-1} = \sum_{i=1}^m \mu_i \alpha_i^{d-1} q_i^{d-1}
	\]
	yielding $ \mu_i (\frac{\lambda_i}{\mu_i})^{\frac{d-1}{d}} = \lambda_i $, i.e. $ \mu_i = \lambda_i $ for each $i\in \{1,\ldots, m\} $.
\end{proof}

\subsection{Other types of Gaussian Mixture Problems}
Various other special cases of Gaussian Mixture decomposition problems have been studied. In the case of equal mixing weights, if all $ q_1,\ldots, q_m$ are assumed to be zero, then the problem can be translated into identifiability for special symmetric tensors. For example in \cite{Chiantini_Ottaviani_Vannieuwenhoven_2016} the authors showed generic identifiability from moments of degree $ 3 $ for all  subgeneric ranks. If all quadratics are assumed to be equal (but not necessarily zero), then the problem can still be reduced to tensor decomposition. This is clear from a statistical point of view, but can also be seen algebraically: if $ q:=q_1 = \ldots = q_m $, then the third-order moments attain the form 
\[ 
	\sum_{i=1}^m \ell_i^3 + q\sum_{i=1}^m \ell_i
\]
where the point $ \sum_{i=1}^m \ell_i  = \mathbb E [Y_1 + \ldots + Y_m] $ is known, since it is the vector of first-order moments of the mixture. Thus one can shift the space such that $ \sum_{i=1}^m \ell_i = 0$ and perform classical tensor decomposition on this kind of Gaussian Mixture problem. 

The case of centered Gaussians is significantly more complex and a simple reduction to tensor decomposition is out of reach at the moment, even if Garg, Kayal and Saha \cite[Section 1.3]{Garg_Kayal_Saha_2020} argue that Gaussian Mixtures in full generality might only be a slightly more general class of polynomials than sums of powers of quadratics. Nevertheless, the assumption of centeredness makes the moments easier to handle. For a normally distributed $ Y\sim \mathcal{N}(\ell, q) $ the degree $ 6 $ moment form would otherwise consists of 4 terms. In order to compute them we would have to look at $ k\in \{3,4,5,6\} $ in \Cref{eq:moment-generating-function-gauss}, obtaining
\[ 
	q^3 + {4\choose 2} q^2\ell^2  + {5\choose 1} q \ell^4 +  \ell^6
\]
as the form whose coefficients are the degree $ 6 $ moments of $ Y $.

\section{Preliminaries and general notation on secant varieties}\label{sec:prelims}

After reducing the identifiability problem for mixtures of centered Gaussians to the problem of identifiability of sums of powers of quadratic forms, we recall the basic notations about secant varieties and contact loci that are the basic tools of a geometric approach to the question and will be used in the next section to prove \Cref{thm:main-result-cubes-quadratics}.

\begin{notation*}
	For $ m\in \N_{>0} $ and elements $ v_1,\ldots, v_m $ of a vector space, let $\langle v_1,\ldots, v_m \rangle $ denote the subspace spanned by them. For projective linear subspaces $\P(V_1),\dots,\P(V_m) \subset \P(V)$ we write $\sum_{i=1}^m \P(V_i)$ for the smallest projective linear subspace containing all of them. 
\end{notation*}

\begin{defi} \label{def:secant-variety}
	(Secant variety) Let $ W $ be a variety embedded in an affine or projective space and $ m\in \N_{>0}$. The \emph{$m$-th secant variety} of $W$ is the Zariski-closure of the union of subspaces spanned by $ m $ elements of $ W $, i.e.,
	\[ 
		\sigma_m(W)=\overline{\bigcup_{x_1,\ldots, x_m\in W} \langle x_1,\ldots, x_m \rangle}.
	\]
\end{defi}
\noindent \textbf{Notation.} Recall that the projective space $\P^N$ is the space of equivalence classes of $\C^N \setminus \{0\}$ with respect to the relation which identifies vectors that are one multiple of each other. For any line through the origin $ 0 \in \C^{N+1}$ we associate a projective point $[x] \in \P^N$ where $x$ is a non-zero point of the line. Given a subvariety $ W $ of the projective space $ \P^{N} $, let $ \aff{W} $ denote the \emph{affine cone} of $ W $, which is the set of all representatives of projective points of $ W $ together with the origin, i.e.,
	\[ 
		\aff{W} := \{{0}\} \cup \bigcup_{[x]\in W} \{x\}  \subseteq \C^{N+1}.
	\]
\begin{remark}\label{rem:affine-param-of-secant}
	In terms of affine cones, the secant variety has a convenient parameterization: Let $ W $ and $ m $ be as in \Cref{def:secant-variety}. Then $ \aff{\sigma_m(W)}$ is the closure of the image of 
	\begin{align}\label{eq:affine-param-of-secant}
		\psi_{m, W}\colon \aff{W}^m \to \aff{\sigma_{m}(W)}, (x_1,\ldots, x_m)\mapsto \sum_{i=1}^m x_i.
	\end{align}
\end{remark}


From the latter parametrization, it is clear that in order to hope for generic identifiability  we need a first necessary condition: $\dim \aff{W}^m = \dim \aff{\sigma_m(W)}$. The left-hand side is equal to $m\cdot \dim \aff{W}$ and is clearly an upper bound for the actual dimension of the (cone of) the $m$-th secant variety. It is called \textit{expected dimension} and, whenever it is not attained we say that the variety is \textit{$m$-defective}. The computation of dimensions of secant varieties, and in particular the classification of defective ones, is a difficult challenge in classical algebraic geometry. The following is the main tool to approach the problem, due to Terracini \cite{Te12}, which describes the general tangent space of the secant variety $\sigma_m(V)$ in terms of $m$ general tangent spaces of $W$.
	
\medskip
\begin{notation*}
	Given an affine variety $ W $ and a point $ x\in W $, $ T_x W$ denotes its tangent space at $ x $. If $W$ is a projective variety, embedded in $\P^N$ then, abusing notation, we will also write $T_x W$ for the embedded projective tangent subspace.  
\end{notation*}
 
\begin{lemma}[Terracini's Lemma \cite{Te12}] \label{lem:Terracini}
	 Let $ W $ be a variety and consider for $ m\in \N $ the secant $ \sigma_m(W) $. For general points $ x_1,\ldots, x_m\in W $ and general $ x\in \langle x_1,\ldots, x_m \rangle \subseteq \sigma_m(W) $, we have that  
	\[ 
		T_x \sigma_m(W) = \sum_{i=1}^m T_{x_i}W
	\]
\end{lemma}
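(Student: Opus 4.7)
The plan is to exploit the affine parametrization recalled in \Cref{rem:affine-param-of-secant} and compute its differential. First I would consider the map
\[
\psi_{m,W}\colon \aff{W}^m \to \aff{\sigma_m(W)},\quad (x_1,\ldots,x_m)\mapsto x_1+\cdots+x_m,
\]
whose image is Zariski dense in $\aff{\sigma_m(W)}$ by definition. At a smooth point $(x_1,\ldots,x_m)\in \aff{W}^m$, the tangent space splits as $T_{x_1}\aff W \oplus \cdots \oplus T_{x_m}\aff W$, and since $\psi_{m,W}$ is the restriction of the linear sum map $\C^{N+1}\times\cdots\times\C^{N+1}\to\C^{N+1}$, its differential at $(x_1,\ldots,x_m)$ is exactly $(v_1,\ldots,v_m)\mapsto v_1+\cdots+v_m$. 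Hence the image of $\mathrm{d}\psi_{m,W}$ at a smooth $m$-tuple equals $T_{x_1}\aff W + \cdots + T_{x_m}\aff W$.

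Next, I would invoke generic smoothness (we work in characteristic zero) applied to the dominant morphism $\psi_{m,W}\colon \aff{W}^m \to \aff{\sigma_m(W)}$. For a general choice of $(x_1,\ldots,x_m)$ the image $y=x_1+\cdots+x_m$ lies in the smooth locus of $\aff{\sigma_m(W)}$, and the image of the differential coincides with $T_y\aff{\sigma_m(W)}$. Combining this with the previous step yields the affine equality
\[
T_y\aff{\sigma_m(W)} \;=\; \sum_{i=1}^m T_{x_i}\aff W.
\]

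To deduce the statement for a general $x\in \langle x_1,\ldots, x_m\rangle$, I would use that $\aff W$ is a cone, so $T_{\lambda x_i}\aff W = T_{x_i}\aff W$ for any nonzero scalar $\lambda$. Any point in $\langle x_1,\ldots,x_m\rangle$ has an affine representative of the form $\lambda_1 x_1 + \cdots + \lambda_m x_m = \psi_{m,W}(\lambda_1 x_1,\ldots,\lambda_m x_m)$, so the previous computation applies verbatim. Finally, passing from affine cones to embedded projective tangent spaces (which are precisely the projectivizations of the affine tangent spaces, with sums of projective linear subspaces corresponding to sums of their affine cones as in our notation) delivers $T_x\sigma_m(W) = \sum_{i=1}^m T_{x_i}W$.

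The main delicate point is the generic-smoothness step: it requires that the general fibre of $\psi_{m,W}$ over its image be reduced of the expected dimension, which is automatic in characteristic zero for a dominant morphism of irreducible varieties, but relies on the irreducibility of $W$ (equivalently, on working with a fixed irreducible component). A secondary subtlety is purely bookkeeping, namely ensuring that the identification between affine and projective tangent spaces is compatible with the span operation, but this is routine once one fixes the convention in the notation above.
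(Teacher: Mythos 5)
Your argument is correct; it is the standard proof of Terracini's Lemma via the differential of the sum map $\psi_{m,W}$ and generic smoothness in characteristic zero. The paper does not prove this statement at all — it is quoted as a classical result with a citation to Terracini — so there is nothing to compare against beyond noting that your route is the textbook one. The only step I would ask you to make explicit is the reduction from $x = x_1+\cdots+x_m$ to a general $x\in\langle x_1,\ldots,x_m\rangle$: you need the rescaled tuple $(\lambda_1 x_1,\ldots,\lambda_m x_m)$ to land in the dense open subset of $\aff{W}^m$ furnished by generic smoothness, which does hold because the rescaling map $\aff{W}^m\times(\C^*)^m\to\aff{W}^m$ is dominant, so that for general $x_i$ and general $\lambda_i$ the rescaled tuple is again a general point; combined with the cone property $T_{\lambda v}\aff{W}=T_v\aff{W}$ this closes the argument exactly as you indicate. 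Your remark that irreducibility of $W$ (or fixing one component) is needed for ``general point'' and for generic smoothness to apply is also well taken, and is implicitly assumed throughout the paper.
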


Terracini's Lemma gives us a way to determine whether the map \eqref{eq:affine-param-of-secant} is generically finite: a first necessary condition for generic identifiability.

\begin{prop} \label{prop:expected-dimension} Let $ W $ be an irreducible variety and $ m\in \N_0 $ such that for general $ x_1,\ldots, x_m \in W$, the tangent spaces at $ x_1,\ldots, x_m $ are skew, i.e.
	\begin{align}\label{eq:expected-dimension-1}
		\sum_{i=1}^m T_{x_i} \aff{W} = \bigoplus_{i=1}^m T_{x_i} \aff{W}
	\end{align}
	Then the map \eqref{eq:affine-param-of-secant} is generically finite. 
\end{prop}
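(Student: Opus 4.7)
The plan is to analyze the differential of the parametrization map $\psi_{m,W}$ at a general point and to show that the skewness hypothesis forces this differential to be injective, from which generic finiteness follows by a standard fiber-dimension argument. Since $\psi_{m,W}$ is defined on the product $\widehat{W}^m$, the tangent space at a smooth point $(x_1,\ldots,x_m)$ splits as $\bigoplus_{i=1}^m T_{x_i}\widehat{W}$, and the differential of $\psi_{m,W}$ sends $(v_1,\ldots,v_m)$ to $v_1+\cdots+v_m$, simply because $\psi_{m,W}$ is the restriction of a linear map.

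First I would pick $(x_1,\ldots,x_m)$ to be a general point of $\widehat{W}^m$; by irreducibility of $W$ we may assume each $x_i$ is a smooth point of $\widehat{W}$, so that $\dim T_{x_i}\widehat{W}=\dim \widehat{W}$ for every $i$. The image of the differential of $\psi_{m,W}$ at this point is the linear sum $\sum_{i=1}^m T_{x_i}\widehat{W}$, which by Terracini's Lemma (\Cref{lem:Terracini}) coincides with $T_x\widehat{\sigma_m(W)}$ for a general $x$ in the span. Now the skewness hypothesis \eqref{eq:expected-dimension-1} says that this sum is in fact a direct sum, so its dimension equals
\[
\sum_{i=1}^m \dim T_{x_i}\widehat{W} \;=\; m\cdot \dim \widehat{W} \;=\; \dim \widehat{W}^m,
\]
which is precisely the dimension of the domain.

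Consequently the differential of $\psi_{m,W}$ at a general point has trivial kernel, i.e.\ it is injective. By semicontinuity of fiber dimension applied to the dominant morphism $\psi_{m,W}\colon \widehat{W}^m \to \overline{\operatorname{im}(\psi_{m,W})}=\widehat{\sigma_m(W)}$, the general fiber has dimension equal to $\dim \widehat{W}^m - \dim \widehat{\sigma_m(W)} = 0$, so it is a finite set of points. This is exactly what it means for $\psi_{m,W}$ to be generically finite.

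The argument is essentially immediate once Terracini's Lemma is in place; the only point that requires a little care is the identification of the tangent spaces of the affine cones, and the reduction to general smooth points of each factor, but this is harmless because $W$ is irreducible and the smooth locus is open and dense, so a general point of $\widehat{W}^m$ lies in the product of smooth loci.
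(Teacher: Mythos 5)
Your argument is correct and follows essentially the same route as the paper's proof: Terracini's Lemma identifies $\sum_i T_{x_i}\aff{W}$ with the tangent space of $\aff{\sigma_m(W)}$ at a general point, the skewness hypothesis then forces $\dim\aff{\sigma_m(W)} = m\cdot\dim\aff{W} = \dim\aff{W}^m$, and the fiber dimension formula gives a zero-dimensional general fiber. The extra remark about injectivity of the differential is a harmless elaboration of the same dimension count.
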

\begin{proof} 
	Let $ x_1,\ldots, x_m \in W$ be general points. By \Cref{lem:Terracini} and generality of $ x_1,\ldots, x_m $, the left hand side of \Cref{eq:expected-dimension-1} has the dimension of $ \aff{\sigma_m(W)} $, while $ \bigoplus_{i=1}^m T_{x_i}\aff{W} \cong T_{(x_1,\ldots, x_m)}\aff{W}^m  $ has the dimension of $ \aff{W}^m $. Let $x \in \langle x_1,\ldots,x_n \rangle$ be a general point. The fiber dimension formula yields together with Terracini's Lemma (\Cref{lem:Terracini}) and the assumption,
	\[ 
		\dim \psi_{m, W}^{-1}(x) = \dim \aff{W}^m - \dim \aff{\sigma_m(W)} = \dim \bigoplus_{i=1}^m T_{x_i} \aff{W} - \dim \sum_{i=1}^m  T_{x_i} \aff{W}  = 0. 
	\]
\end{proof}



Proposition \ref{prop:expected-dimension} gives a tool to examine whether a mixture decomposition problem has only finitely many solutions, basically only requiring us to calculate the dimension of certain vector spaces. Answering the question of identifiability requires further analysis, since the question is a priori not just about the dimension of the generic fibers of $ \psi_{m, W} $, but about their cardinality. In other words, we need to show that the map $\psi_{m, W}$ is actually birational. If so, we say that both $ \psi_{m, W} $ and $ \sigma_{m}(W) $ are \emph{generically identifiable}.

\medskip
However, in \cite{casarotti2022non}, it is shown that under certain numerical assumptions, proving non $m$-defectivity implies generic $(m-1)$-identifiability. We will employ this fact in Section \ref{sec:nonDef_to_Ident}. 
	
On the other hand, in a series of papers \cite{Chiantini_Ottaviani_2012,Chiantini_Ottaviani_Vannieuwenhoven_2014,
Chiantini_Ottaviani_Vannieuwenhoven_2016}, the geometry of the so-called \textit{tangential contact locus} has been used to study identifiability for tensor decompositions. We will follow the same idea in Section \ref{sec:quadratic-identifiability}. 

\begin{defi}\label{def:contact-locus}
	Let $ W $ a variety, $ m\in \N_{>0} $ and $ x = (x_1,\ldots, x_m)\in W^m $ an $m$-tuple of smooth points of $ W $ with skew tangent spaces. The ($ m $-th) \emph{tangential contact locus} $\mathcal{C}_{W}(x)$ of $ W $ at $ x $ is the projective subvariety of
	\[ 
		\Gamma_W(x) := \left\{ y\in W \mid  T_y W \subseteq \sum_{i=1}^m T_{x_i} W  \right\}.
	\]
constinting of points $z \in \Gamma_W(x)$ such that the irreducible components of $\Gamma_W(x)$ passing through $z$ contain at least one of $x_1,\ldots, x_m$.
\end{defi}

An easy semicontinuity argument enables us to check general identifiability only by studying the tangential contact locus of a specific decomposition $t=x_1+\ldots+x_m$.
Indeed a more general statements holds, see for example \cite[Proposition 2.3]{Chiantini_Ottaviani_Vannieuwenhoven_2014}

\section{Identifiability for powers of forms}\label{sec:identifiability}

Let  $ V_{k, d} = \{q^d \mid q\in \P(S^k(\C^{n})) \}$ denote the projective variety of $ d $-th powers of degree-$ k $ forms. 
We suppress the dependency on $ n\in \N $. 


\begin{prop} \label{prop:iso+tangent}
	For $k, d\in \N $, the map 
	\[
		\iota:\P(S^k(\C^n))\to V_{k, d} \subseteq \P(S^{kd}(\C^n)), p\mapsto p^d
	\] 
	is an embedding. 
\end{prop}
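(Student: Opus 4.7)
The plan is to verify the two standard conditions that characterise a closed embedding of smooth projective varieties in characteristic zero: $\iota$ is injective on points, and its differential is injective at every point. Since $\P(S^k(\C^n))$ is smooth and projective, these together are enough to conclude that $\iota$ is an isomorphism onto its image $V_{k,d}$.

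First I would establish set-theoretic injectivity. Suppose $[p^d]=[q^d]$ in $\P(S^{kd}(\C^n))$, so that $p^d=\lambda q^d$ for some $\lambda\in\C^*$. I would then invoke unique factorization in the polynomial ring $\C[X_1,\ldots,X_n]$: comparing irreducible factorizations of $p^d$ and $q^d$ forces $p$ and $q$ to share the same irreducible factors with the same multiplicities, up to a scalar. Hence $p=\mu q$ for some $\mu\in\C^*$ with $\mu^d=\lambda$, and therefore $[p]=[q]$ in $\P(S^k(\C^n))$.

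Next I would compute the differential and show it is injective at every $[p]$. The affine lift $\hat\iota:S^k(\C^n)\to S^{kd}(\C^n)$, $p\mapsto p^d$, has differential $D\hat\iota_p(h)=d\,p^{d-1}h$. Passing to projective tangent spaces, the induced map reads
\[
T_{[p]}\P(S^k(\C^n)) \cong S^k(\C^n)/\langle p\rangle \longrightarrow S^{kd}(\C^n)/\langle p^d\rangle \cong T_{[p^d]}\P(S^{kd}(\C^n)),\quad [h]\mapsto [d\,p^{d-1}h].
\]
If $d\,p^{d-1}h\in\langle p^d\rangle$, say $d\,p^{d-1}h=\lambda p^d$ for some $\lambda\in\C$, then I can cancel $p^{d-1}$ in the integral domain $\C[X_1,\ldots,X_n]$ to obtain $d\,h=\lambda p$, whence $[h]=0$ in $S^k(\C^n)/\langle p\rangle$. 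So the differential is injective everywhere, and I obtain as a by-product the explicit description of the affine tangent space $T_{p^d}\aff{V_{k,d}} = p^{d-1}\cdot S^k(\C^n)$, which will be useful later for Terracini-type computations.

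Combining injectivity of $\iota$ with injectivity of its differential at every point, the standard criterion (an injective and unramified morphism from a smooth projective variety is a closed immersion) produces the desired isomorphism $\iota:\P(S^k(\C^n))\xrightarrow{\sim}V_{k,d}$. Because the argument rests only on unique factorization and the integral-domain property of $\C[X_1,\ldots,X_n]$, I do not anticipate any substantive obstacle; the one mildly subtle point is simply to confirm that the linear form ``cancelling $p^{d-1}$'' argument descends correctly to the quotient identifications of the projective tangent spaces.
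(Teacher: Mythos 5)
Your proof is correct, but it takes a genuinely different route from the paper. The paper factors $\iota$ as the $d$-th Veronese embedding $\nu_d\colon \P(S^k(\C^n))\to\P(S^d(S^k(\C^n)))$ followed by the linear projection $\pi_E$ away from the degree-$d$ part $E$ of the ideal of the $k$-th Veronese variety, and then invokes the classical criterion that a linear projection restricted to a variety $X$ is an embedding once the center avoids $\sigma_2(X)$; the geometric input is that every point of $\sigma_2(\nu_d(\P(S^k(\C^n))))$ is represented by a completely reducible form $\ell^{d-1}m$ or $\ell^d+m^d=\prod_i(\ell+\zeta^i m)$, and no product of hyperplanes can contain the irreducible, nondegenerate Veronese variety, so such forms never lie in $E$. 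You instead verify directly that $\iota$ separates points (via unique factorization: $p^d=\lambda q^d$ forces $p=\mu q$) and separates tangent vectors (cancelling $p^{d-1}$ in the integral domain $\C[X]$), and then apply the standard characteristic-zero criterion that an injective, unramified morphism from a projective variety is a closed immersion — equivalently, Hartshorne's criterion for the sub-linear-system of $|\cO(d)|$ defining $\iota$. Your argument is more elementary and self-contained, and it hands you the explicit tangent space $T_{p^d}\aff{V_{k,d}}=p^{d-1}\cdot S^k(\C^n)$ as a by-product, which the paper anyway needs later for its Terracini computations; the paper's argument is shorter at the cost of quoting the secant-variety projection lemma, and it fits the secant-variety theme of the rest of the article. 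Both are complete proofs.
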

\begin{proof}
	The map $\iota$ can be regarded as the following composition:
	\[
		\P(S^k(\C^n)) \xrightarrow{\nu_d} \P(S^d(S^k(\C^n))) \xrightarrow{\pi_E} \P(S^{kd}(\C^n))
	\]
	where: 
	\begin{itemize}
		\item $\nu_d$ is the $d$-th Veronese embedding sending linear forms to their degree-$d$ power;
		\item $\pi_E$ is the orthogonal linear projection induced by the decomposition \[S^d(S^k(\C^n)) = S^{kd}(\C^n) \oplus E\] where $E$ is the degree-$d$ part of the ideal of the $k$-th Veronese embedding of $\P(\C^n)$. 
	\end{itemize}
	The center of the projection $E$ does not intersect the $2$-nd secant variety $\sigma_2(\nu_d(\P(S^k(\C^n))))$. Indeed, all forms in the latter secant variety is either of the form $\ell^{d-1}m$ or $\ell^d+m^d$, where $\ell,m$ are linear forms, and they are both completely reducible, since $l^d+m^d=\Pi_{i=1}^{d}(l+\zeta^im)$ where $\zeta=e^{\frac{2\pi i}{d}}$ is a primitive $d$-th root of unity. However, the Veronese variety is irreducible and is not contained in any hyperplane. Therefore, $\sigma_2(\nu_d(\P(S^k(\C^n)))) \cap E = \emptyset$ and the projection $\pi_E$ restricted to the image $\nu_d(\P(S^k(\C^n)))$ is an isomorphism and the composition $\pi_E \circ \nu_d$ is an embedding. 
\end{proof}

We prove general identifiability for the $m$-th secant variety of $V_{k,d}$ with two approaches. 

\subsection{From non-defectivity to identifiability}\label{sec:nonDef_to_Ident}
In \cite{casarotti2022non}, Casarotti and Mella derive general identifiability as a consequence of the next-order secant having expected dimension. 

\begin{thm}\cite[Introduction]{casarotti2022non}\label{thm:IdentMella}
	Let $W$ be a smooth variety of dimension $n\in \N$ and let $ m\in \N $. Assume that the $m$-th secant variety is of (expected) dimension $m(n+1)-1$ and $m > 2n$. Then $W$ is $(m - 1)$-identifiable.
\end{thm}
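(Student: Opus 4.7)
The plan is to prove $(m-1)$-identifiability of $W$ by contradiction, starting from the generic finiteness of $\psi_{m,W}$ implied by the non-defectivity of $\sigma_m(W)$.

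First, I would observe that the hypothesis $\dim \sigma_m(W) = m(n+1)-1$, together with \Cref{lem:Terracini} (Terracini's Lemma), yields via \Cref{prop:expected-dimension} that the parametrization $\psi_{m,W}\colon \aff{W}^m \to \aff{\sigma_m(W)}$ is generically finite. So general $m$-term decompositions are at worst discretely non-unique.

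Next, I would apply an \emph{add-a-point reduction}. Assume for contradiction that $W$ is not $(m-1)$-identifiable, so that a general $z \in \sigma_{m-1}(W)$ admits a positive-dimensional family of distinct decompositions $z = y_1(t) + \cdots + y_{m-1}(t)$. Fixing a general auxiliary $x_m \in \aff{W}$ and setting $w := z + x_m$, one obtains a positive-dimensional family of $m$-term decompositions $w = y_1(t) + \cdots + y_{m-1}(t) + x_m$, apparently contradicting the generic finiteness of $\psi_{m,W}$ at $w$.

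The main obstacle, which I expect to be the technical heart of the proof, is ruling out that this apparent family of $m$-decompositions collapses, after reindexing the summands, back into the zero-dimensional fiber of $\psi_{m,W}$ at $w$: a priori, the variation of the $y_i(t)$ could be absorbed by letting $x_m$ interchange with one of the $y_i$'s. The standard way to overcome this, which I would follow, is via a \emph{tangential projection} from the span $L := \sum_{i=1}^{m-1} T_{y_i(0)} \aff{W}$: identifiability reduces to showing that the induced map from the projected variety $\pi_L(\aff{W})$ to the quotient $\aff{\sigma_m(W)}/L$ is birational near $\pi_L(x_m)$. The numerical hypothesis $m > 2n$ is precisely what guarantees that this tangential projection still has sufficient ambient dimension and that the projected variety remains nondegenerate, so that the image of the generic $x_m$ cannot align with the image of the varying family. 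Establishing this nondegeneracy and closing the contradiction via a reconstruction argument for the projected variety is where the subtle numerical bound $m > 2n$ enters, and this is the step I would expect to be by far the most delicate.
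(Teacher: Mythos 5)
This statement is quoted from \cite{casarotti2022non} and the paper offers no proof of its own, so your attempt must be measured against the actual argument of Casarotti--Mella. As written, it has a genuine gap at its very first step: you assume that failure of $(m-1)$-identifiability produces a \emph{positive-dimensional} family of decompositions of a general $z \in \sigma_{m-1}(W)$. It does not. Since $\sigma_m(W)$ has expected dimension, so does $\sigma_{m-1}(W)$ (one always has $\dim\sigma_m(W) \le \dim\sigma_{m-1}(W) + n + 1$, forcing equality downward), hence by \Cref{prop:expected-dimension} the general fiber of $\psi_{m-1,W}$ is already finite. Non-identifiability means this finite fiber has cardinality at least two modulo permutations. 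Your add-a-point construction therefore produces only finitely many $m$-term decompositions of $w = z + x_m$, which is perfectly consistent with generic finiteness of $\psi_{m,W}$; no contradiction arises. In effect your argument rules out only a scenario that is already excluded by non-defectivity for trivial reasons, and never touches the actual content of the theorem, which is the passage from ``finitely many decompositions'' to ``exactly one.''

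The correct route, and the one taken in \cite{casarotti2022non}, runs through tangential contact loci rather than fibers: by the Chiantini--Ottaviani--Vannieuwenhoven circle of ideas (cf.\ \Cref{prop:contact-locus-curve} and \Cref{thm:witnesses-generic-identifiability}), failure of $(m-1)$-identifiability of a non-defective variety forces the tangential contact locus at $m-1$ general points to contain a curve through each point, i.e.\ $W$ is $(m-1)$-tangentially weakly defective. The substance of Casarotti--Mella is then to show that a positive-dimensional contact locus, combined with the numerical hypothesis $m > 2n$, forces $\sigma_m(W)$ to be defective, contradicting the hypothesis. Your instinct that tangential projections and the bound $m > 2n$ are where the difficulty lives is sound, but the object whose positive-dimensionality drives the argument is the contact locus $\Gamma_W(x)$, not the fiber of the secant map, and conflating the two is what breaks the proof.
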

By simple computation of differentials, it is immediate to notice that the tangent space to $V_{k,d}$ at $q = p^d$ is given by
	$ 
		\{ h p^{d-1} \mid h\in S^k(\C^{n}) \}.
	$
	Therefore, by Terracini's Lemma (Lemma \ref{lem:Terracini}), in order to prove that secant varieties of $V_{k,d}$ have the expected dimension, we only have to show that the tangent spaces $T_{p_1^d} V_{k, d},\ldots, T_{p_m^d} V_{k, d}$ are skew for a general choice of the $p_i$'s. That is equivalent to say that the degree-$kd$ part of the ideal $(p_1^{d-1},\ldots,p_m^{d-1})$ has maximal dimension. This is implied by a more general fact related to Fr\"oberg's Conjecture on Hilbert series of general ideals. 
	
	Given a homogeneous ideal $I \subset \C[X]$, the Hilbert series of the associated quotient ring is
	\[
		\HS(\C[X]/I;T) = \sum_{i \in \N}\dim (\C[X]_i / I_i)T^i \in \N[[T]]
	\] 
	where $I_i := I \cap \C[X]_i$. Fr\"oberg's Conjecture \cite{froberg1985inequality} says that given a general ideal $I = (g_1,\ldots,g_m) \subset \C[X]$ with $\deg(g_i) = d_i$, the Hilbert series is given by the  formula
	\begin{equation}\label{eq:Froberg}
		\left[\frac{\prod_{i=1}^m (1-T^{d_i})}{(1-T)^n}\right]
	\end{equation}
	where $[\cdot]$ means that the power series obtained by the fraction is truncated before the first non-positive coefficient. In \cite[Conjecture 2]{nicklasson2017hilbert}, it is conjectured that, whenever $\deg(p_i) > 1$, the ideal $(p_1^{d-1},\ldots,p_m^{d-1})$ has the Hilbert series \eqref{eq:Froberg} for $d_i = (d-1)k$ for a general choice of the $p_i$'s, namely
	\[
		\left[(1-T^{(d-1)k})^m\cdot \sum_{j\in \N} {n-1+j \choose n-1}T^j\right].
	\] 
	If the latter holds, then it is immediate to see that the coefficient of $T^{dk}$ is \[{n-1+kd \choose n-1} - m{n-1+k \choose n-1} = \dim \C[X]_{kd} - m\cdot \dim \C[X]_k,\] i.e., the tangent spaces $T_{p_1^d} V_{k, d},\ldots, T_{p_m^d} V_{k, d}$ are skew. 
	
	For fixed positive integers $a,h$, Nenashev showed in \cite[Theorem 1]{nenashev2017note} that the the coefficient of $T^{a+h}$ of the Hilbert series of an ideal $I$ is as prescribed by \eqref{eq:Froberg} whenever $I = (g_1,\ldots,g_m)$ with $\deg(g_i) = a$, where the $g_i$'s are chosen generically from a nonempty variety $\mathcal{D} \subset S^d(\C^n)$ that is closed under linear transformation, and $m \leq \frac{\dim \C[X]_{a+h}}{\dim \C[X]_h}-\dim \C[X]_h.$ 
	
	In conclusion, by applying the latter result for $a = (d-1)k, h = k$ and $\mathcal{D}$ is the tangential variety of $V_{k,d}$, i.e., $\mathcal{D} = \{p^{d-1}h ~|~ p, h \in S^k\}$, we immediately deduce the following.
	
\begin{thm}
The dimension of the $m$-th secant variety of $V_{k,d}$ is as expected, i.e., \[\dim \sigma_m V_{k,d} = m\cdot \dim \C[X]_k-1\] for $m\leq \frac{\dim \C[X]_{kd}}{\dim \C[X]_k}-\dim \C[X]_k$. 
\end{thm}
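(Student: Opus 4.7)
The plan is to translate the expected-dimension claim, via Terracini's Lemma, into a Hilbert-function statement and then invoke Nenashev's theorem directly.

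First, by \Cref{lem:Terracini}, for general $p_1,\ldots, p_m \in S^k(\C^n)$ the affine tangent space to the cone $\aff{\sigma_m(V_{k,d})}$ at a general point of $\langle p_1^d, \ldots, p_m^d\rangle$ equals $\sum_{i=1}^m T_{p_i^d}^{\mathrm{aff}} V_{k,d}$. A direct differentiation of the parametrization $p \mapsto p^d$ identifies
\[
T_{p^d}^{\mathrm{aff}} V_{k,d} \;=\; p^{d-1}\cdot \C[X]_k,
\]
a subspace of $\C[X]_{kd}$ of dimension $\dim \C[X]_k$, since multiplication by the nonzero form $p^{d-1}$ is injective. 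Consequently,
\[
\sum_{i=1}^m T_{p_i^d}^{\mathrm{aff}} V_{k,d} \;=\; I_{kd}, \qquad I := (p_1^{d-1},\ldots,p_m^{d-1}) \subset \C[X],
\]
and the expected-dimension claim $\dim \sigma_m(V_{k,d}) = m\cdot \dim \C[X]_k - 1$ becomes equivalent (via \Cref{prop:expected-dimension}) to the skewness statement $\dim I_{kd} = m \cdot \dim \C[X]_k$, i.e., to a prescribed value for the coefficient of $T^{kd}$ in $\HS(\C[X]/I;T)$.

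This is precisely what Nenashev's theorem delivers once applied with $a := (d-1)k$, $h := k$, and the variety $\mathcal{D} := V_{k,d-1} \subset S^{(d-1)k}(\C^n)$ of $(d-1)$-th powers of $k$-forms. The variety $\mathcal{D}$ is $\GL_n$-stable (hence closed under linear change of coordinates), the $p_i^{d-1}$ are general elements of $\mathcal{D}$ (since $p_i$ is general and $p \mapsto p^{d-1}$ is dominant onto $V_{k,d-1}$ by \Cref{prop:iso+tangent}), and the numerical bound $m \le \dim \C[X]_{kd}/\dim \C[X]_k - \dim \C[X]_k$ is exactly Nenashev's hypothesis $m \le \dim \C[X]_{a+h}/\dim \C[X]_h - \dim \C[X]_h$. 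Evaluating the Fr\"oberg prediction \eqref{eq:Froberg} at $T^{kd}$ in this range yields $\dim \C[X]_{kd} - m \dim \C[X]_k$, which is the desired codimension.

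There is essentially no serious obstacle in carrying this out: the two Nenashev hypotheses reduce to an immediate $\GL_n$-invariance check and an elementary genericity-transport statement. All the substantive content is absorbed into Nenashev's theorem, which we invoke as a black box; the only real work is the dictionary between skewness of tangent spaces at $p_i^d$ and the Hilbert function of the ideal generated by the $p_i^{d-1}$.
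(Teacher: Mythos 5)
Your proof is correct and follows essentially the same route as the paper: Terracini's Lemma together with the identification $T_{p^d}\aff{V_{k,d}} = p^{d-1}\cdot\C[X]_k$ reduces the claim to the value of the Hilbert function of $(p_1^{d-1},\ldots,p_m^{d-1})$ in degree $kd$, which is then supplied by Nenashev's theorem with $a=(d-1)k$ and $h=k$. The only difference is cosmetic, and in your favor: you take $\mathcal{D}=V_{k,d-1}$, of which the generators $p_i^{d-1}$ genuinely are general points, whereas the paper names the tangential variety of $V_{k,d}$ (whose elements live in degree $dk$ rather than $(d-1)k$), so your formulation of the application of Nenashev is the cleaner one.
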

Therefore, by Theorem \ref{thm:IdentMella} and since generic $m$-identifiability implies generic $(m-1)$-th identifiability, we have the following identifiability result.
\begin{cor}\label{cor:identifiability_arbitrary}
	The $m$-th secant variety of $V_{k,d}$ is generically $m$-identifiable for $ m \leq \frac{\dim \C[X]_{kd}}{\dim \C[X]_k}-\dim \C[X]_k-1$ provided that $2(\dim \C[X]_k-1) < \frac{\dim \C[X]_{kd}}{\dim \C[X]_k}-\dim \C[X]_k$. 
\end{cor}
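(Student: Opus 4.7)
The plan is to package this as a direct consequence of the two results proved immediately before the statement: the Nenashev-based non-defectivity theorem for $\sigma_m(V_{k,d})$ and the Casarotti--Mella identifiability criterion (Theorem~\ref{thm:IdentMella}). All the real work has already been done; what remains is bookkeeping of the numerical hypotheses.

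First I would record that $V_{k,d}$ is smooth and irreducible of projective dimension $n_V := \dim \C[X]_k - 1$. Both facts follow from Proposition~\ref{prop:iso+tangent}, which exhibits $V_{k,d}$ as the image of $\P(S^k(\C^n))$ under an embedding $\iota$. Smoothness is crucial, since Theorem~\ref{thm:IdentMella} requires it; the dimension formula is what makes the ``expected dimension'' statements on either side match.

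Set
\[
    M \;:=\; \frac{\dim \C[X]_{kd}}{\dim \C[X]_k} - \dim \C[X]_k,
\]
so the range in the corollary reads $1 \le m \le M-1$ and the numerical hypothesis reads $M > 2n_V$. The non-defectivity theorem stated just above gives $\dim \sigma_M(V_{k,d}) = M\dim \C[X]_k - 1 = M(n_V+1)-1$, i.e., $\sigma_M(V_{k,d})$ has the expected dimension. Combined with $M > 2n_V$, this is exactly the hypothesis of Theorem~\ref{thm:IdentMella} with its role of ``$m$'' played by our $M$ and its ``$n$'' by our $n_V$. The conclusion is that $V_{k,d}$ is generically $(M-1)$-identifiable.

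To finish, I would invoke the downward-propagation fact recalled in the paragraph preceding the corollary: generic $s$-identifiability of a variety implies generic $(s-1)$-identifiability. Iterating this from $s = M-1$ downward yields generic $m$-identifiability for every $m$ in the claimed range $1 \le m \le M-1$. There is no serious obstacle: the only delicate point is recognizing that the second numerical hypothesis $2(\dim \C[X]_k - 1) < M$ is precisely what is needed so that Theorem~\ref{thm:IdentMella} can be applied at the top of the range; the low-rank cases are then handled by monotonicity of generic identifiability, not by a second application of Theorem~\ref{thm:IdentMella} (whose lower bound $m > 2n$ would fail there).
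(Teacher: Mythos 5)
Your proposal is correct and matches the paper's own argument exactly: the paper likewise combines the preceding non-defectivity theorem with Theorem~\ref{thm:IdentMella} (applied at the top of the range, where the hypothesis $2(\dim \C[X]_k-1) < \frac{\dim \C[X]_{kd}}{\dim \C[X]_k}-\dim \C[X]_k$ supplies the required bound $m>2n$) and then descends via the fact that generic $m$-identifiability implies generic $(m-1)$-identifiability. Nothing further is needed.
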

In particular, in the $(k,d) = (2,3)$ case, we obtain the condition $(1)$ of the main \Cref{thm:main-result-cubes-quadratics}. Indeed, note that the condition required by \Cref{cor:identifiability_arbitrary}, i.e., \[2\left({n+1 \choose 2}-1\right) < \frac{{n+5 \choose 6}}{{n+1 \choose 2}}-{n+1 \choose 2},\]
holds if and only if $n > 16$.  
\begin{cor}[{\Cref{thm:main-result-cubes-quadratics}, Condition $(1)$}]\label{thm:main-result-cubes-quadratics-restatement}
	Let $ n, m\in \N $ such that $n > 16$. Then, for $m \le {n+5 \choose 6}/{n+1 \choose 2}-{n+1 \choose 2}-1$ and general $q_1,\ldots, q_m \in S^2(\C^n) $ and general $t \in \langle q_1^3,\ldots,q_m^3\rangle$ there is a unique representation of $t$ as sum $m$ cubes of quadratic forms.
\end{cor}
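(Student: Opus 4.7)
The plan is to derive this as the $(k, d) = (2, 3)$ specialization of \Cref{cor:identifiability_arbitrary}. By \Cref{prop:iso+tangent}, the cubing map $q \mapsto q^3$ is a morphism of projective spaces that embeds $\P(S^2(\C^n))$ onto $V_{2, 3}$, so generic $m$-identifiability of $\sigma_m(V_{2, 3})$ in the sense of that corollary translates directly into the statement that a general $t \in \langle q_1^3,\ldots,q_m^3 \rangle$ has a unique expression as a sum of $m$ cubes of quadratic forms (up to permutation of the summands; note that the cubing map identifies $q$ with its multiples by third roots of unity, but these yield the same summand $q^3$, so at the level of cubes nothing further is lost).

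Next, I would substitute $k = 2$, $d = 3$ into \Cref{cor:identifiability_arbitrary}. The bound on $m$ becomes exactly $m \le {n+5 \choose 6}/{n+1 \choose 2} - {n+1 \choose 2} - 1$, matching the hypothesis, while the auxiliary provision $2(\dim\C[X]_k - 1) < \dim\C[X]_{kd}/\dim\C[X]_k - \dim\C[X]_k$ of that corollary becomes
\[
	2\left({n+1 \choose 2}-1\right) < \frac{{n+5 \choose 6}}{{n+1 \choose 2}} - {n+1 \choose 2},
\]
which is precisely the inequality singled out in the paragraph just before the statement.

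All that remains is the elementary verification that the displayed inequality is equivalent to $n > 16$. Using ${n+5 \choose 6}/{n+1 \choose 2} = (n+5)(n+4)(n+3)(n+2)/360$ and clearing denominators turns it into a quartic polynomial inequality in $n$, which I would check by direct evaluation at $n = 16$ (where it fails) and at $n = 17$ (where it holds), after which the positive leading $n^4$ term guarantees that it continues to hold for all larger $n$. I do not anticipate any real obstacle: the substantive content has already been packaged inside \Cref{cor:identifiability_arbitrary} via the non-defectivity theorem of \cite{nenashev2017note} combined with the Casarotti--Mella criterion (\Cref{thm:IdentMella}), and the present statement just unpacks that corollary in the $(k, d) = (2, 3)$ regime relevant to the moment problem for mixtures of centered Gaussians.
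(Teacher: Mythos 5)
Your proposal is correct and follows exactly the paper's own route: the corollary is obtained by specializing \Cref{cor:identifiability_arbitrary} to $(k,d)=(2,3)$ via the embedding of \Cref{prop:iso+tangent}, with the auxiliary provision reducing to $2\bigl(\binom{n+1}{2}-1\bigr) < \binom{n+5}{6}/\binom{n+1}{2} - \binom{n+1}{2}$, which holds precisely for $n>16$ (it fails at $n=16$: $406 \not< 399$, and holds at $n=17$: $457 < 487.6\ldots$). Your numerical verification and the remark about third roots of unity being absorbed by the cubing map are both consistent with the paper.
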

	Corollary \ref{cor:identifiability_arbitrary} provides results also for sums of higher powers of high degree forms. In the case of power of quadratics ($k = 2$), the condition required by \Cref{cor:identifiability_arbitrary} reduces to 
	\begin{equation*}
		3{n+1 \choose 2}^2 - 2{n+1 \choose 2} < {n-1+2d \choose 2d}
	\end{equation*}
and Theorem~\ref{thm:generalD} follows. The latter inequality holds for pairs $(n,d)$ in the region represented in Figure \ref{fig:generalD}. 


\subsection{Tangential contact locus for cubes of quadratics}\label{sec:quadratic-identifiability}
The second approach relies on an earlier result due to Chiantini and Ottaviani \cite{Chiantini_Ottaviani_2012}, which provides generic identifiability as a consequence of a dimension argument for the tangential contact locus. 

A semicontinuity argument enables us to check general identifiability only by studying the tangential contact locus of a specific decomposition $t=x_1+\ldots+x_m$. Indeed, the statements below relate both via another question of dimension.

\begin{prop}[{\cite[Proposition 2.3]{Chiantini_Ottaviani_Vannieuwenhoven_2014}}]\label{prop:contact-locus-curve}
	Let $ W $ be an irreducible, nondegenerate variety of dimension $ n\ge 2 $, which is not $m$-defective. 
	If the generic element of $ \sigma_m(W) $ is not identifiable, then for general $ x\in W^m $ and each $ i\in \{1,\ldots, m\} $, the tangential contact locus to $ W $ at $ x $ must contain a curve through $ x_i $. 
\end{prop}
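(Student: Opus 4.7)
My plan is to combine a Terracini computation with a deformation/branched-cover argument in the spirit of Chiantini--Ciliberto and Chiantini--Ottaviani--Vannieuwenhoven.

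First, since $W$ is not $m$-defective, the map $\psi_{m,W}:\aff{W}^m \to \aff{\sigma_m(W)}$ from \Cref{rem:affine-param-of-secant} is generically finite by \Cref{prop:expected-dimension}. Non-identifiability of the generic element of $\sigma_m(W)$ therefore produces, for a general $x = (x_1,\ldots,x_m) \in W^m$ with $t = \psi_{m,W}(x)$, another tuple $y = (y_1,\ldots,y_m)$ in the same fiber which is not a permutation of $x$. Applying \Cref{lem:Terracini} to both decompositions yields
\[
\sum_{i=1}^m T_{x_i} W \;=\; T_t \sigma_m(W) \;=\; \sum_{j=1}^m T_{y_j} W,
\]
and non-defectivity forces both sums to be direct. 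Hence $T_{y_j} W \subseteq \sum_i T_{x_i} W$ for every $j$, so each $y_j$ already lies in $\Gamma_W(x)$.

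Next I would produce a curve in $\Gamma_W(x)$ through a chosen $x_{i_0}$ by deforming $x$ in a single coordinate. Let $D \subset W$ be a smooth curve through $x_{i_0}$ with local parameterization $\gamma:(\C,0)\to D$, $\gamma(0)=x_{i_0}$, and set $x(s)=(x_1,\ldots,\gamma(s),\ldots,x_m)$ (with $\gamma(s)$ in the $i_0$-th slot). Let $C\subset\sigma_m(W)$ be the curve traced by $t(s) = \psi_{m,W}(x(s))$. Pulling back $\psi_{m,W}$ over $C$ yields a cover $\widetilde{C}\to C$ of degree $\geq 2$ by non-identifiability; one sheet corresponds to $x(s)$ and I pick another corresponding to an alternative decomposition $y(s) = (y_1(s),\ldots,y_m(s))$. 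The first paragraph, applied pointwise, gives $y_j(s)\in\Gamma_W(x(s))$ for every $s$ and $j$. Since the cover has degree $\geq 2$ over an irreducible curve, a branched-cover/monodromy analysis (choosing $D$ rational when possible, or otherwise using that the ramification locus of $\psi_{m,W}$ has codimension one in $\sigma_m(W)$) produces a parameter $s_0$ at which the two sheets collide, i.e., $y(s_0)$ is a permutation of $x(s_0)$. Up to reindexing the $y$'s, $y_1(s_0)$ equals the $i_0$-th coordinate of $x(s_0)$. Replacing $x$ by $x(s_0)$ (still a general tuple, since $\gamma(s_0)$ is a general point of $W$), the trajectory $s\mapsto y_1(s)$ is a curve in $W$ through $x_{i_0}$, and closedness of the tangent-inclusion condition ensures its (germ) limit lies in $\Gamma_W(x)$. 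Since $i_0$ was arbitrary, this delivers the required curve through every $x_i$.

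The main obstacle is the closedness/limit step in the previous paragraph: the points $y_1(s)$ lie in the moving locus $\Gamma_W(x(s))$ rather than in the fixed $\Gamma_W(x(0))$, so I need to verify that the limit curve really sits inside $\Gamma_W(x(0))$. A clean way is to realize the relative incidence
\[
\mathcal{G} \;=\; \{(s,w)\in C\times W \,:\, T_w W \subseteq \textstyle\sum_i T_{x_i(s)} W\}
\]
as a family over $C$ and invoke semicontinuity of fiber dimension at the collision point, or, equivalently, perform a jet calculation in $s$ to check the tangent inclusion to first order, which is tractable since both $x(s)$ and $y(s)$ depend algebraically on $s$. One also has to rule out the case where $\widetilde{C}\to C$ is étale, which is handled by choosing $D$ so that $C$ meets the ramification locus of $\psi_{m,W}$ transversally (possible because the ramification locus is a divisor in $\sigma_m(W)$ whenever the cover has degree $\geq 2$). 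This is essentially the reasoning behind \cite[Proposition 2.3]{Chiantini_Ottaviani_Vannieuwenhoven_2014}.
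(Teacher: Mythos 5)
The paper does not prove this statement --- it is quoted verbatim from \cite[Proposition 2.3]{Chiantini_Ottaviani_Vannieuwenhoven_2014} --- so I can only assess your argument on its own terms. Your first paragraph (Terracini forces every alternative decomposition of $t=\psi_{m,W}(x)$ into $\Gamma_W(x)$) is correct and is indeed the standard opening move. The rest, however, has genuine gaps, the central one being exactly the obstacle you flag yourself and do not resolve. The points $y_1(s)$ live in the \emph{moving} loci $\Gamma_W(x(s))$, and neither of your proposed repairs closes the gap: upper semicontinuity of fiber dimension for $\mathcal{G}\to C$ only bounds the fiber at $s_0$ from below by the \emph{generic} fiber dimension, which is precisely the quantity you are trying to show is positive, so the appeal is circular; and a first-order jet computation would at best put the tangent direction of $s\mapsto y_1(s)$ inside the tangent inclusion condition at $s_0$, not an actual curve inside $\Gamma_W(x(s_0))$. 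There is a second, independent gap: the collision parameter $s_0$ is by construction a ramification point of $\widetilde{C}\to C$, so $x(s_0)$ lies over the branch locus and is \emph{not} a general tuple. Since the condition $T_yW\subseteq\sum_i T_{x_i}W$ is closed in $(y,x)$, contact loci can only jump \emph{up} under specialization; a positive-dimensional $\Gamma_W$ at the special tuple $x(s_0)$ therefore says nothing about general $x$, which is what the proposition asserts. Finally, the existence of $s_0$ itself is not justified: purity of the branch locus requires regularity (or at least normality) of the target, which $\sigma_m(W)$ need not have, and even a divisorial branch locus does not force the two \emph{chosen} sheets $x(s)$ and $y(s)$ (rather than some other pair) to collide.

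The fix, and the route actually taken in the literature (Chiantini--Ciliberto and \cite{Chiantini_Ottaviani_Vannieuwenhoven_2014}), is to keep the tuple $x=(x_1,\ldots,x_m)$ \emph{fixed} and instead vary the point $t'$ inside the fixed span $\langle x_1,\ldots,x_m\rangle$. For every general $t'$ in this $(m-1)$-plane, Terracini gives $T_{t'}\sigma_m(W)=\sum_i T_{x_i}W$, the \emph{same} linear space for all $t'$, so every point of every alternative decomposition of every such $t'$ lands in the single fixed locus $\Gamma_W(x)$. One then argues that these points cannot all be confined to a finite set (finitely many candidate spans $\langle y_1,\ldots,y_m\rangle$ cannot account for the general point of the $(m-1)$-plane unless one of them coincides with $\langle x_1,\ldots,x_m\rangle$, a case that is excluded separately), which produces a positive-dimensional component of $\Gamma_W(x)$; a further degeneration/monodromy argument forces such components through each $x_i$. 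I would recommend either reworking your second step along these lines or simply citing the result, as the paper does.
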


\begin{thm}[{\cite[Proposition 2.4]{Chiantini_Ottaviani_2012}}]\label{thm:witnesses-generic-identifiability} 
	Let $W$ be a nondegenerate, irreducible smooth variety and $ m\in \N_{>0} $. 
	Consider the following statements:
	\begin{enumerate}[(i)]
		\item The $m$-th secant map $ \psi_{m, W} $ is generically identifiable. 
		\item For every $m$ \emph{general} points $x_1,\ldots, x_m\in W$, $T_{x_1}W,\ldots, T_{x_m} W$ are skew spaces and the dimension of  $ \mathcal{C}_{W}(x_1,\ldots, x_m)$ at every $ x_i $ is zero.
		\item There exist \emph{$m$ specific points} $x_1,\ldots, x_m \in W$ with skew tangent spaces $$ T_{x_1}W,\ldots, T_{x_m} W$$ such that the dimension of  $ \mathcal{C}_{W}(x_1,\ldots, x_m)$ at a specific $ x_i $ is zero.
	\end{enumerate}
	Then we have (iii)$ \implies $(ii)$ \implies $(i).
\end{thm}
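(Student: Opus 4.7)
The plan is to split the two implications. I would prove (iii)$\implies$(ii) by upper semicontinuity of fiber dimension on a suitable incidence variety, and then (ii)$\implies$(i) essentially as the contrapositive of \Cref{prop:contact-locus-curve}, so the real geometric content lives in that already-cited proposition.

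For (iii)$\implies$(ii), first note that skewness of the tangent spaces $T_{x_1}W,\ldots,T_{x_m}W$ is defined by the nonvanishing of an appropriate minor on $W^m$, hence it is a Zariski-open condition; so if it holds at one specific tuple, it holds on a dense open subset of $W^m$. For the tangential contact locus, I would introduce an incidence variety
\[
	\mathcal{I} \;=\; \{\,(x_1,\ldots,x_m,y)\in W^{m+1}\;:\;T_yW\subseteq T_{x_1}W+\cdots+T_{x_m}W\,\}
\]
together with, for each $i$, the subvariety $\mathcal{I}_i\subseteq\mathcal{I}$ cut out by the condition $y=x_i$ or, more precisely, by requiring $y$ to lie in an irreducible component of the fiber passing through $x_i$. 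Upper semicontinuity of fiber dimension applied to $\mathcal{I}_i\to W^m$ gives that the locus
\[
	U_i := \{\,x\in W^m\;:\;\dim_{x_i}\mathcal{C}_W(x)=0\,\}
\]
is Zariski-open in $W^m$. Hypothesis (iii) then says that some $U_{i_0}$ is nonempty, hence dense; but since the labeling of the factors of $W^m$ is purely formal (the symmetric group acts on $W^m$ permuting the coordinates, and the defining condition of $\mathcal{C}_W$ is invariant), each $U_i$ is a translate of $U_{i_0}$ under a coordinate permutation and therefore also dense and open. The intersection $U_1\cap\cdots\cap U_m$, intersected with the open set where the tangent spaces are skew, is still dense open, and on this set the conclusion of (ii) holds.

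For (ii)$\implies$(i), observe that the skewness clause in (ii), via Terracini's Lemma and \Cref{prop:expected-dimension}, guarantees that $W$ is not $m$-defective, so the hypothesis of \Cref{prop:contact-locus-curve} is met. Arguing by contradiction, if the generic element of $\sigma_m(W)$ were not identifiable, \Cref{prop:contact-locus-curve} would force the tangential contact locus at a general $m$-tuple to contain a curve through \emph{every} $x_i$, contradicting $\dim_{x_i}\mathcal{C}_W(x)=0$ for all $i$. Hence the generic fiber of $\psi_{m,W}$ is a single (unordered) tuple, which is exactly generic identifiability.

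The main obstacle I anticipate is making the semicontinuity step in (iii)$\implies$(ii) fully rigorous, because ``dimension of the contact locus at the point $x_i$'' is a local notion and the contact locus is defined as a union of selected components of $\Gamma_W(x)$ rather than as a straightforward fiber. The careful construction of $\mathcal{I}_i$ so that its fiber over $x\in W^m$ is exactly the component of $\mathcal{C}_W(x)$ through $x_i$ (and not some unrelated component of $\Gamma_W(x)$ that happens to pass through $x_i$ only in degenerate cases) is the technical heart of the argument; everything else is standard dimension bookkeeping and an appeal to the already-proven \Cref{prop:contact-locus-curve}.
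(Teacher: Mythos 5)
Your plan is correct and is essentially the argument of the cited source \cite{Chiantini_Ottaviani_2012}: the paper itself imports this theorem without proof, and the intended derivation is exactly yours, namely (ii)$\implies$(i) as the contrapositive of \Cref{prop:contact-locus-curve} (whose non-defectivity hypothesis you rightly extract from the skewness clause via Terracini), and (iii)$\implies$(ii) by openness of skewness plus semicontinuity of the local dimension of the contact locus. The one technical worry you raise largely dissolves: since a component of $\Gamma_W(x)$ not passing through $x_i$ cannot contain $x_i$, one has $\dim_{x_i}\mathcal{C}_W(x)=\dim_{x_i}\Gamma_W(x)$, so you may run upper semicontinuity of fiber dimension directly on the honest incidence variety $\mathcal{I}\to W^m$ (restricted to the open locus where the tangent spaces are skew, so that the containment $T_yW\subseteq\sum_i T_{x_i}W$ is a closed rank condition) composed with the section $x\mapsto(x,x_i)$, with no need to track the selected components of $\mathcal{C}_W$ in families.
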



In order to prove the idenfifiability result, we will construct a specific set of quadratic forms $q_1,\ldots, q_m $, where $ m ={n\choose 2}+1 $, such that the tangential contact locus at $ q_1,\ldots, q_m $ consists only of the points $ q_1,\ldots, q_m$ with skew tangent spaces. This proves by \Cref{thm:witnesses-generic-identifiability} that the secant of rank up to $ {n\choose 2}+1  $ is identifiable, for \emph{any} $ n \in \N $. We will use variables $ X = (X_1,\ldots, X_n) $ as local affine coordinates. Given $ q_1,\ldots, q_m \in \C[X]_{2} $ we denote, by abuse of notation, with $\aff{\mathcal{C}}(q_1,\ldots, q_m)$ the affine cone of the preimage of the tangential contact locus at the points $[q_1^3],\dots,[q_m^3]$ via the map $\iota$ from \cref{prop:iso+tangent}. Similarly, we denote by $\aff{\Gamma}(q_1,\ldots, q_m) := \aff{\iota^{-1}(\Gamma_{V_{2, 3}}([q_1^3],\dots,[q_m^3]))} $ the preimage via $ \iota $ of $ \Gamma_{V_{2, 3}} $ at $ q_1,\ldots, q_m $, cf. \cref{def:contact-locus}. This notation suppresses the dependency on the variety $ V:= V_{2, 3} $, which depends on the number $ n $ of variables. Making the expression for tangent spaces at $V$ explicit, we have:
\begin{align*}
	\aff{\Gamma}(q_1,\ldots, q_m) &= \left \{ p\in \C[X]_{2} \mid \forall h\in \C[X]_{2}: \exists h_1,\ldots, h_m \in \C[X]_{2}: p^2 h = \sum_{i=1}^m q_i^2 h_i \right \} .	
\end{align*}

\begin{defi}\label{defi:binomial-set} For $i, j\in \{1,\ldots, n\}$ with $ i < j $, define
	\begin{align}
			q_{ij} := (X_i + X_j)^2
	\end{align}
	and let
	\begin{align*}
		\mathcal{B}_n := \{(X_i + X_j)^2 \mid i, j \in  \{1,\ldots, n\}, i < j\} \cup \{X_1^2\}
	\end{align*}
	We call $\mathcal{B}_n$ the \emph{binomial set} of quadratics in dimension $ n $. 
	Up to relabeling we can write $\mathcal{B}_n = \{q_1,\ldots, q_{{n \choose 2}+1}\}$, where the order of the elements is arbitrary. 
\end{defi}

\begin{remark}\label{lem:binomset-induction-stable}  It is an easy consequence of the previous proposition that the following equality holds for $ n\ge 2 $:
	\[ 
		\mathcal{B}_{n-1} \cup \{4X_1^2\} = \{p(X_1,\ldots, X_{n-1}, X_1) \mid p \in \mathcal{B}_n  \}
	\]
\end{remark}

\begin{thm}\label{thm:skewness-binomset} 
	The tangent spaces at elements of the binomial set $ \mathcal{B}_n $ are skew, i.e.
	\[ 
		T_{q_1}\aff{V} + \ldots + T_{q_{{n \choose 2}+1}}\aff{V}  = T_{q_1}\aff{V} \oplus \ldots \oplus T_{q_{{n \choose 2}+1}}\aff{V}
	\]
\end{thm}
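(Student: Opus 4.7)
The plan is to induct on $n$, leveraging \Cref{lem:binomset-induction-stable} in its symmetric form: since the indices $2,\ldots,n$ play interchangeable roles in defining $\mathcal{B}_n$, the substitution $X_k \mapsto X_1$ for any $k \in \{2,\ldots,n\}$ sends the generators of $\mathcal{B}_n$ into $\mathcal{B}_{n-1}$ in the remaining $n-1$ variables, with the same collision pattern $(X_1 + X_k)^2 \mapsto 4X_1^2$ and $(X_i + X_k)^2 \mapsto (X_1 + X_i)^2$ recorded in that remark. The base cases $n \le 5$ reduce to a concrete linear-algebra question, resolved via the computational package \cite{Taveira_Blomenhofer_Code_Cubes_Of_Quadratics_2022}.

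For the inductive step with $n \ge 6$, suppose
\[
\sum_{1 \le i < j \le n} h_{ij}(X_i + X_j)^4 + h_1 X_1^4 = 0
\]
for some $h_{ij}, h_1 \in \C[X]_2$. For each $k \in \{2,\ldots,n\}$, I apply the substitution $X_k \mapsto X_1$ and gather the coefficient of each generator of $\mathcal{B}_{n-1}$ after the collisions. The inductive hypothesis forces each aggregated coefficient polynomial in $\C[X \setminus X_k]_2$ to vanish; pulled back to $\C[X]$, this gives the divisibility conditions
\[
(X_k - X_1) \mid h_{ij} \quad \text{for } i,j \in \{2,\ldots,n\} \setminus \{k\},\ i < j,
\]
\[
(X_k - X_1) \mid h_{1j} + h_{\{j,k\}} \quad \text{for } j \in \{2,\ldots,n\} \setminus \{k\},
\]
\[
(X_k - X_1) \mid h_1 + 16\, h_{1k}.
\]

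The finishing step uses that a nonzero quadratic form cannot be divisible by three pairwise non-proportional linear factors. For each pair $1 < i < j \le n$, varying $k$ over $\{2,\ldots,n\} \setminus \{i,j\}$ supplies $n - 3 \ge 3$ such factors of $h_{ij}$, so $h_{ij} = 0$. The second divisibility then simplifies to $(X_k - X_1) \mid h_{1j}$ for $k \ne j$, supplying $n - 2 \ge 4$ factors and hence $h_{1j} = 0$; the third collapses to $(X_k - X_1) \mid h_1$ for all $k \in \{2,\ldots,n\}$, yielding $h_1 = 0$.

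The main obstacle is the verification of the base cases $n \le 5$, where the number of admissible substitutions is too small to produce enough linear divisors of the $h_{ij}$. I expect to handle these either by the aforementioned direct computation, or by a more refined combinatorial argument that combines the residual divisibility relations with specializations at $X_1 = 0$ (exploiting that the subspaces $X_j^4 \cdot \C[X \setminus X_1]_2$ have disjoint monomial supports and so trivially intersect in $\C[X \setminus X_1]_6$).
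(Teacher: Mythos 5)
Your proof is correct and follows essentially the same strategy as the paper: induction on $n$ with computer-verified base cases $n\le 5$, identification of a variable $X_k$ with $X_1$ to invoke the skewness of $\mathcal{B}_{n-1}$, and the observation that a quadratic form divisible by three pairwise non-proportional linear forms $X_k-X_1$ must vanish. The only divergence is in the endgame: the paper kills the coefficients $h_{1j}$ and $h_1$ by a separate degree-in-$X_2$ argument (the monomials of degree $\ge 4$ in $X_2$ coming from $h_{12}(X_1+X_2)^4$ cannot cancel), whereas you track the collision terms $(X_j+X_k)^2\mapsto(X_1+X_j)^2$ and $(X_1+X_k)^2\mapsto 4X_1^2$ through each substitution and deduce $(X_k-X_1)\mid h_{1j}+h_{jk}$ and $(X_k-X_1)\mid h_1+16h_{1k}$, which collapse to the desired vanishing once the $h_{ij}$ with $i,j\ne 1$ are gone; this is a uniform and fully spelled-out alternative to the step the paper leaves to the reader, and it makes the threshold $n\ge 6$ (from $n-3\ge 3$) transparent.
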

\begin{proof}
	We proceed by induction on $ n$. For $n \leq 5$, we verify the statement on a computer. The code may be found on GitHub \cite{Taveira_Blomenhofer_Code_Cubes_Of_Quadratics_2022}. 
	Therefore we can assume that $ n\geq 6 $ and that the claim on $ \mathcal{B}_k $ is true for all $ k < n $. Let $ h_1, h_{ij} \in S^2(\C^n) $, where $ i, j \in \{1,\ldots, n\} $ and $ i < j $. Suppose that
	\begin{align}\label{eq:skewtangents-1}
		0 = h_1 X_1^4 + \sum_{1 \le i < j \le n} h_{ij} (X_i + X_j)^4 	
	\end{align}
	We have to show that $ h_1 = h_{ij} = 0 $ for all $ 1 \le i < j \le n $. Denote $ h_{ji} := h_{ij} $. 
	Since $ \mathcal{B}_n $ is symmetric under permutations of $ \{X_2,\ldots, X_n\} $, we can interchange any two variables not equal to $X_1$. Thus without loss of generality the only two cases to consider are $ (i, j) = (2, 3) $ and $ (i, j) = (1, 2) $. 
	Let us first consider the case $ (i, j) = (2, 3) $. 
	Since $ n\ge 6 $, we may apply the substitution 
	\begin{align*}
		\varphi_{4}\colon \C[X] \to \C [X_1,\ldots, X_3,X_5\ldots, X_n ], X_4\mapsto X_1	
	\end{align*}
	to reduce to a case with one variable less. We obtain 
	\begin{align}\label{eq:skewtangents-2}
	0 = \varphi_{4}(h_1) X_1^4 + \sum_{\substack{1 \le k < l \le n\\
	4\notin \{k, l\}}} \varphi_{4}(h_{kl}) (X_k + X_l)^4 	+ \sum_{k=1}^n \varphi_{4}(h_{4k}) (X_k + X_1)^4 
	\end{align}
	Now 
	note that the form	 $ (X_2 + X_3)^4 $ can only occur on the first summation, yielding $ \varphi_{4}(h_{23}) = 0 $. Therefore by construction $ (X_1 - X_4) $ divides the quadratic form $ h_{23} $. Repeating this same argument with the substitutions
	\begin{align*}
		&\varphi_{5}\colon X_5 \mapsto X_1\\
		&\varphi_{6}\colon X_6 \mapsto X_1	
	\end{align*} 
	yields that $ (X_1 - X_5) $ and $ (X_1 - X_6) $ divide $ h_{23} $, too. Since these linear forms are coprime, $ (X_1 - X_4)(X_1 - X_5)(X_1 - X_6)  $ must divide $ h_{23} $, which for degree reasons is only possible if $ h_{23} = 0$. By symmetry of $ \mathcal{B}_n $, we get that $ h_{ij} = 0$ for all pairs $ \{i, j\} $ not containing 1. Thus \Cref{eq:skewtangents-1} simplifies to 
	\begin{align}\label{eq:skewtangents-3}
		0 = h_1 X_1^4 + \sum_{j = 1}^n h_{1j} (X_1 + X_j)^4 	
	\end{align}
	As for the $(i, j) = (1, 2) $ case: If $ h_{12} $ were not the zero form, then $ h_{12} (X_1 + X_2)^4 $ would contain a monomial of degree at least $ 4 $ in $ X_2 $. Since all other addends in \Cref{eq:skewtangents-3} can only contain monomials of degree at most $ 2 $ in $ X_2 $, the terms of degree at least $ 4 $ in $ X_2 $ from $ h_{12} (X_1 + X_2)^4 $ could not cancel with any other addend from \eqref{eq:skewtangents-3}. After a short argument left to the reader, this forces $ h_{12} = 0 $ and by symmetry thus $ h_{13} = \ldots = h_{1n} = 0 $. Finally, we also must have $ h_{1} = 0 $, as it is the only remaining term in \eqref{eq:skewtangents-3}. 
\end{proof}

Now we show that the tangential contact locus for the binomial set is zero dimensional at each point of the binomial set.

\begin{thm}\label{thm:cl-binomset} For $n\in \N$, and each $ q\in \{q_1,\ldots, q_{{n \choose 2}+1}\} $, locally around $ q $, $ \aff{\mathcal{C}}(q_1,\ldots, q_{{n \choose 2}+1}) $ only contains points from the line  $ \C q $.
\end{thm}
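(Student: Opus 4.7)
My plan is to show that the Zariski tangent space of $\widehat{\Gamma}(q_1, \ldots, q_{\binom{n}{2}+1})$ at each $q \in \mathcal{B}_n$ is exactly the line $\C q$, and then use that $\widehat{\Gamma}$ is an affine cone to conclude. Indeed, the defining condition $p^2 \cdot S^2(\C^n) \subseteq W := \sum_i q_i^2 \cdot S^2(\C^n)$ is scaling-invariant, so every irreducible component of $\widehat{\Gamma}$ is itself a cone; in particular, any one-dimensional component through a nonzero point $q$ is a line through the origin and must therefore coincide with $\C q$. Since $\widehat{\mathcal{C}} \subseteq \widehat{\Gamma}$ contains $\C q$, showing $\dim T_q \widehat{\Gamma} = 1$ forces the unique irreducible component of $\widehat{\Gamma}$ (and hence of $\widehat{\mathcal{C}}$) through $q$ to be set-theoretically the line $\C q$.

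Linearizing the defining condition at $p = q + r$ yields the tangent condition $q \cdot r \cdot S^2(\C^n) \subseteq W$, with $\C q \subseteq T_q \widehat{\Gamma}$ tautological. I would prove the reverse inclusion by induction on $n$, verifying the base cases (say $n \leq 5$) by computer as in the proof of \Cref{thm:skewness-binomset}. For the inductive step ($n \geq 6$) I would employ the substitutions $\varphi_j\colon X_j \mapsto X_1$ used there. Surjectivity of $\varphi_j$ on quadratic forms transfers the tangent condition to
\[
\varphi_j(q) \cdot \varphi_j(r) \cdot S^2(\C^{n-1}) \subseteq \sum_i \varphi_j(q_i)^2 \cdot S^2(\C^{n-1}),
\]
and \Cref{lem:binomset-induction-stable} identifies the image $\{\varphi_j(q_i)\}$ with $\mathcal{B}_{n-1}$ up to a single harmless scalar duplicate. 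Applying the inductive hypothesis at $\varphi_j(q)$ (using scale-invariance of the tangent space of the cone $\widehat{\Gamma}$) yields a scalar $\lambda_j \in \C$ with $\varphi_j(r) = \lambda_j \varphi_j(q)$, equivalently $r - \lambda_j q \in (X_j - X_1) \subseteq \C[X]$.

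To patch these constraints together, for any pair $j \neq j' \in \{2, \ldots, n\}$ subtracting the two memberships gives $(\lambda_j - \lambda_{j'}) q \in (X_j - X_1, X_{j'} - X_1)$. A short case check over the possible shapes of $q \in \mathcal{B}_n$ — namely $q = X_1^2$, $q = (X_a + X_b)^2$ with $1 \notin \{a, b\}$, and $q = (X_1 + X_b)^2$ — shows that the linear form underlying $q$ never vanishes identically on the codimension-$2$ subspace $\{X_j = X_{j'} = X_1\}$, so $q$ is not in this ideal and hence $\lambda_j = \lambda_{j'}$. With a common value $\lambda$, one obtains $r - \lambda q \in \bigcap_{j=2}^n (X_j - X_1)$; since a quadratic divisible by $n - 1 \geq 3$ pairwise coprime linear forms must vanish, this forces $r = \lambda q$.

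The main obstacle I anticipate is the exceptional case $q = (X_1 + X_b)^2$, where $\varphi_b(q) = 4 X_1^2$ is a scalar multiple of a \emph{different} element of $\mathcal{B}_{n-1}$ than the other $\varphi_j(q)$'s. In that setting the scalars $\lambda_j$ provided by the inductive hypothesis originate from tangent spaces at genuinely different base points, so the pairwise comparison has to be carried out in the ambient ring $\C[X]$ rather than in the various quotients. The scale-invariance of $\widehat{\Gamma}$ and the uniform non-vanishing of $q$ on the codimension-two subspaces are what make the ambient-ring argument go through uniformly; the remaining work is bookkeeping parallel to the casework already performed in the proof of \Cref{thm:skewness-binomset}.
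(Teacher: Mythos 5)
Your proposal is correct, and it takes a genuinely different route through the same inductive framework. The paper argues set-theoretically: for a point $p$ of $\aff{\Gamma}$ near $q$ it applies only the two substitutions $X_n\mapsto X_1$ and $X_{n-1}\mapsto X_1$, writes $p=\lambda q+\ell(X_1-X_n)=\lambda' q+\ell'(X_1-X_{n-1})$, and uses a monomial-support argument to conclude that $p$ involves only five variables, reducing to the computer-verified case $n=5$. You instead linearize first: the Zariski tangent space of $\aff{\Gamma}$ at $q$ is contained in $\{r\colon qr\cdot S^2(\C^n)\subseteq W\}$ (the differential of the quadratic defining equations --- note this containment only needs those equations to cut out $\aff{\Gamma}$ set-theoretically, which is the right direction for an upper bound), and you run the induction on this linear condition using \emph{all} substitutions $X_j\mapsto X_1$, closing with the fact that a quadratic divisible by three pairwise coprime linear forms vanishes. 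This buys a cleaner endgame --- no neighbourhood bookkeeping, and it replaces the tersest step of the paper's proof (``thus we reduced to the case of $5$ variables'') by an explicit divisibility argument --- at the price of handling the exceptional image $\varphi_b\bigl((X_1+X_b)^2\bigr)=4X_1^2$, which you correctly observe is harmless since the linear condition and its conclusion are invariant under rescaling the base point. One point to make explicit in a write-up: your base case is the statement $\{r\colon qr\cdot S^2\subseteq W\}=\C q$ for $n\le 5$, which is a priori stronger than the set-theoretic local statement verified in the paper's Theorem~\ref{thm:cl-binomset}; it is, however, exactly the kind of ``dimension of a vector space of polynomials'' computation described in the remark following that theorem, so deferring it to the computer is consistent with the paper's own methodology. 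With that check in place, the passage from $\dim T_q\aff{\Gamma}=1$ to the local conclusion is sound: $\C q\subseteq\aff{\Gamma}$ forces $q$ to be a regular point on a unique one-dimensional component, which is $\C^{*}$-invariant and hence equals $\C q$.
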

\begin{proof} 
	We use affine notation and proceed by induction on the number $ n $ of variables. The base cases $ n \leq 5$ were verified on a computer, see \cite{Taveira_Blomenhofer_Code_Cubes_Of_Quadratics_2022}. 
	
	Thus let us assume $ n\ge 6 $. As $ \mathcal{B}_n $ is invariant under permutations of $ X_2,\ldots, X_n $, it suffices to show the claim at $ q\in  \{X_1^2, (X_1 + X_2)^2, (X_2 + X_3)^2\} $. In particular, we may assume that $ q $ is a polynomial in $ X_1, X_2, X_3 $. As we work locally around $ q $, it does not matter whether we show the statement for $ \aff{\Gamma} $ or $ \aff{\mathcal{C}} $, cf. \cref{def:contact-locus}.  
	We thus have to show that there exists a neighbourhood $ \mathcal{U}\subseteq \C[X]_{2} $ of $ q $ such that $ \mathcal{U}\cap \aff{\Gamma}(q_1,\ldots, q_m) \subseteq \C q $. 
	Consider the substitution $$ \varphi\colon \C[X] \to \C[X_1,\ldots, X_{n-1}]$$ that maps $X_n$ to $X_1$. Note $ \varphi(q) = q $, as $ n\ge 6 $. By induction hypothesis, we know there exists a neighbourhood $ \mathcal{V} \subseteq \C[X_1,\ldots, X_{n-1}]_{2} $ of $ q $ such that $ \mathcal{V} \cap \aff{\Gamma}(\mathcal{B}_{n-1}) \subseteq \C q $. This means that for all 
	\[
		p\in \varphi^{-1}(\mathcal{V}) \cap \aff{\Gamma}(q_1,\ldots, q_m) 
	\]
	there is $ \lambda\in \C $ such that $ \varphi(p) = \lambda q $. In other words, $$(X_1-X_n)| (p-\lambda q)$$ Repeating the same argument with the substitution $ \varphi' $ that maps $ X_{n-1} $ to $ X_1 $, we obtain another neighbourhood $ \mathcal{V}' $ with the property that for each $$ p\in \varphi'^{-1}(\mathcal{V}') \cap \aff{\Gamma}(q_1,\ldots, q_m) $$ it holds that $ \varphi'(p) = \lambda' q$.
	
	Let $ \mathcal{U} = \varphi^{-1}(\mathcal{V}) \cap \varphi'^{-1}(\mathcal{V}')$, then for each $ p\in \mathcal{U} \cap \aff{\Gamma}(q_1,\ldots, q_m)$,  we can find $ \lambda, \lambda' \in \C $ and linear forms $ \ell, \ell'\in \C[X]_{1} $ such that 
\begin{equation}\label{tang:contact:equality}	
	 \lambda q + \ell (X_1 - X_n) = p = \lambda' q + \ell'(X_1-X_{n-1}) 
\end{equation} 
Finally, we have that $ \ell $ has to be a polynomial in the variables $ \{X_1, X_{n-1}\} $: Indeed, if a variable $ X_j $ for some $ j\notin \{1, n-1\} $ occurred in $ \ell $, then the monomial $ X_jX_n $ on the left hand side of \eqref{tang:contact:equality} could not cancel with any other terms on the left-hand side, but does also not occur on the right hand side. It follows that $ p $ is a polynomial in $ \{X_1, X_2, X_3, X_{n-1}, X_n\} $. Thus we reduced to the case of $ 5 $ variables and proved the claim. 
\end{proof}

\begin{remark}
	\begin{enumerate}
		\item[(a)] Our results do \emph{not} imply that the mixture of cubes of quadratics $\sum_{q\in \mathcal{B}_n} q^3 $ has a unique decomposition as a sum of ${n\choose 2} + 1$ cubes of quadratics! In \cite{Chiantini_Ottaviani_Vannieuwenhoven_2016}, the authors consider some sufficient criteria for the identifiability of specific tensors that maybe, albeit with unnegligible effort, could be transferred to the setting of cubes of quadratics. We did not do any work regarding specific identifiability for cubes of quadratics.
		\item[(b)] We verify \Cref{thm:skewness-binomset} and \Cref{thm:cl-binomset} for $ n=5 $ on a computer. The code is publicly available on GitHub, see \cite{Taveira_Blomenhofer_Code_Cubes_Of_Quadratics_2022}. This base case can  be verified using only methods of Numerical Linear Algebra (such as determining dimensions of certain vector spaces of polynomials) and should therefore be easy to reproduce independently. 
	\end{enumerate}	
\end{remark}




\bibliography{bibML}
\bibliographystyle{plain}

\end{document}